\definecolor{lightgray}{rgb}{0.8, 0.8, 0.8}
\definecolor{darkgray}{rgb}{0.7, 0.7, 0.7}
\definecolor{darkblue}{rgb}{0, 0, .4}
\newcommand{\Av}{\operatorname{Av}}
\newcommand{\C}{\mathcal{C}}
\newcommand{\X}{\mathcal{X}}
\newcommand{\A}{\mathcal{A}}
\newcommand{\B}{\mathcal{B}}
\newcommand{\I}{\mathcal{I}}
\newcommand{\D}{\mathcal{D}}
\newtheorem{theorem}{Theorem}
\newtheorem{proposition}{Proposition}[section]
\newtheorem{lemma}{Lemma}[section]
\newtheorem{corollary}{Corollary}[section]
\newcommand{\case}[2]{{\bf \noindent Case #1: The basis of #2}}
\title{Pattern classes and priority queues}
\author{Michael Albert and M. D. Atkinson\\
Department of Computer Science\\University of Otago, New Zealand}
\begin{document}
\maketitle
\begin{abstract}
When a set of permutations comprising a pattern class $\C$ is submitted as input to a priority queue the resulting output is again a pattern class $\C'$.  The basis of $\C'$ is determined for pattern classes $\C$ whose basis elements have length 3, and is finite in these cases.  An example is given of a class $\C$ with basis $\{2431\}$ for which $\C'$ is not finitely based.
\end{abstract}
\begin{center}{\em Mathematics Subject Classification: 05A05, 68P05}\end{center}

\section{Introduction}\label{intro}

The theory of permutation patterns can trace its origin back to the abstract datatype {\tt Stack}. The earliest non trivial example of a pattern class is the set of permutations that can be sorted by a stack, which is the same as the set of 231-avoiding permutations.  This class was first studied and enumerated by Knuth in Volume 1 of \cite{knuth:the-art-of-comp:1} where he also showed how to characterize and enumerate the pattern class associated with the more complex datatype {\tt Restricted input deque}.  Over a period of more than 40 years generalizations of {\tt Stack} have appeared several times in the pattern class literature (see \cite{bona:a-survey-of-sta:} for a survey).

Stacks are characterized by their removal rule: when an item is removed from a stack it is always the most recently inserted item that is removed. By contrast the datatype {\tt Priority queue} is characterized by a different removal rule: the removed item is always the entry of \emph{smallest} value.  The associated sorting problem is trivial: a priority queue can sort every permutation.
Despite this, as we shall see below, there are pattern class aspects of priority queues that are very challenging.  

The study of pattern classes is also the study of the \emph{pattern containment order}. For our purposes it will be helpful to give a somewhat abstract description of this order. Let $\alpha$ and $\beta$ be two sequences of distinct values from some linearly ordered set. We say that $\alpha$ and $\beta$ \emph{have the same pattern} or are  \emph{isomorphic}, and write $\alpha \sim \beta$, if they have the same length $n$ and, for all $1 \leq i, j \leq n$, $\alpha_i < \alpha_j$ if and only if $\beta_i < \beta_j$. It is not even essential for this definition that $\alpha$ and $\beta$ be sequences from the same set, and allowing this ambiguity it is clear that every finite sequence $\alpha$ of distinct values from some linearly ordered set is isomorphic to a unique permutation of $[n] = \{1, 2, \dots, n \}$ (with its usual ordering $1 < 2 < \dots < n$) where $n$ is the length of $\alpha$.

Slightly more generally we say that $\alpha$ is \emph{contained as a pattern} in $\beta$ if $\alpha$ is isomorphic to some subsequence of $\beta$. In this case we write $\alpha \preceq \beta$. It is clear that $\preceq$ is a partial order when restricted to permutations. If $\alpha$ is not contained as a pattern in $\beta$ then we say that $\beta$ \emph{avoids} $\alpha$.

Pattern classes are, by definition, sets of permutations that are closed downwards in the pattern containment order on permutations.  It follows that every pattern class $\X$ can be defined in terms of a set $B$  of permutations that it avoids as
\[\X=\Av(B)=\{\sigma\mid \sigma\mbox{ does not contain }\beta\mbox{ for all }\beta\in B\}\]
The unique minimal set of avoided permutations is called the \emph{basis} of the pattern class.  Much of the study of pattern classes is concerned with investigating the structure of a pattern class given its basis.

To study priority queues however we need a partial order defined on \emph{pairs} of permutations.  Let $(\sigma, \tau)$ be any pair of permutations of length $n$ and let $S$ be any subset of $[n]$.  Let $\sigma|_S, \tau|_S$ be the subsequences of $\sigma,\tau$ whose entries are the members of $S$ and let $\alpha,\beta$ be the permutations isomorphic to $\sigma|_S, \tau|_S$.  Then we write $(\alpha,\beta)\preceq(\sigma,\tau)$ and the binary relation so defined is easily seen to be a partial order.  In a sense made precise in \cite{atkinson:permuting-machi:} this order is the 3-dimensional analogue of the 2-dimensional pattern containment order.  The two orders are connected via the following result which follows directly from the definitions.

\begin{proposition}\label{closed}
Let $\B$ be any set of pairs of permutations that is closed downwards in the order on pairs and let $\C$ be any pattern class.  Then both
\[\C\B=\{\tau\mid (\sigma,\tau)\in \B\mbox{   for some }\sigma\in \C\}\]
and
\[\B\C=\{\sigma\mid (\sigma,\tau)\in \B\mbox{   for some }\tau\in \C\}\]
are pattern classes.
\end{proposition}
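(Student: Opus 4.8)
The plan is to show each set is closed downward in the pattern containment order on permutations, which by definition is exactly what it means to be a pattern class. I will focus on $\C\B$; the argument for $\B\C$ is entirely symmetric, interchanging the roles of the two coordinates.

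So suppose $\tau \in \C\B$, meaning there exists $\sigma \in \C$ with $(\sigma, \tau) \in \B$, and suppose $\tau' \preceq \tau$. I need to produce some $\sigma' \in \C$ with $(\sigma', \tau') \in \B$. First I would express the containment $\tau' \preceq \tau$ concretely: if $\tau$ has length $n$ and $\tau'$ has length $m$, there is a subset $T \subseteq [n]$ of size $m$ such that $\tau'$ is isomorphic to the subsequence $\tau|_T$. The key idea is to use the \emph{positions} occupied by the entries of $\tau|_T$ rather than the values, because the order on pairs in the definition selects a common set $S$ of indices for both coordinates simultaneously. Concretely, I would let $S$ be the set of positions at which these $m$ entries of $\tau$ sit, and then form the restricted pair $(\sigma|_S, \tau|_S)$; call the isomorphic permutations $(\sigma', \tau')$ after relabelling. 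By the definition of the order on pairs, $(\sigma', \tau') \preceq (\sigma, \tau)$.

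The two facts I then need to assemble are these. Because $\B$ is closed downward in the order on pairs and $(\sigma', \tau') \preceq (\sigma, \tau) \in \B$, we get $(\sigma', \tau') \in \B$. Because $\C$ is a pattern class and $\sigma'$ is isomorphic to a subsequence of $\sigma \in \C$, i.e. $\sigma' \preceq \sigma$, we get $\sigma' \in \C$. These two together give $\tau' \in \C\B$, as required, so $\C\B$ is closed downward and hence a pattern class.

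The one genuinely delicate point is bookkeeping: the definition selects a common index set $S$ for both permutations, so I must be careful that the same subset $S$ which realizes $\tau' \preceq \tau$ on the $\tau$-coordinate simultaneously produces a valid $\sigma'$ on the $\sigma$-coordinate with $\sigma' \preceq \sigma$. This is automatic once I phrase everything in terms of positions $S \subseteq [n]$ rather than chasing value sets separately, but it is the step where a careless argument could go wrong by conflating the value-restriction used in the permutation order with the index-restriction used in the order on pairs. Everything else is a direct unwinding of the two definitions of downward closure, so I expect no further obstacles.
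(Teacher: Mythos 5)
Your overall skeleton is the right one (and is essentially the ``follows directly from the definitions'' argument the paper has in mind): show $\C\B$ is downward closed by restricting the pair $(\sigma,\tau)$ to a common subset $S$, then invoke downward closure of $\B$ in the pair order and of $\C$ in the pattern order. But the one point you single out as the key delicate step is exactly backwards, and as written the proof breaks there. The paper's order on pairs is defined by restricting both coordinates to a common set of \emph{values}, not positions: $S$ is a subset of $[n]$ and $\sigma|_S,\tau|_S$ are ``the subsequences of $\sigma,\tau$ whose entries are the members of $S$'' --- entries, i.e.\ values. (This is also the only reading under which the order makes sense for priority queues: the values are the items common to input and output, whereas position $i$ of the input and position $i$ of the output are unrelated; and the paper uses $\tau|_S$ for value-restriction throughout, e.g.\ in Section 3.) If you instead restrict both coordinates to a common set of \emph{positions}, the resulting pair need not lie below $(\sigma,\tau)$ in the pair order, so your appeal to downward closure of $\B$ is invalid. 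Concretely, take $(\sigma,\tau)=(231,213)$: restricting to positions $\{1,2\}$ yields the pattern pair $(12,21)$, but the pairs obtained from two-element \emph{value} sets are only $(21,21)$, $(21,12)$ and $(12,12)$, so $(12,21)\not\preceq(231,213)$. Thus if $\B$ were the downward closure of $\{(231,213)\}$, your argument would place a pair in $\B$ that is not there.

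The repair is immediate, which is why this is a fixable error rather than a doomed strategy: given an occurrence of $\tau'$ in $\tau$, let $S$ be the set of \emph{values} of the entries of $\tau$ in that occurrence (a subsequence of a permutation is equally well specified by its value set as by its position set). Then $\tau|_S\sim\tau'$ by choice of $S$, and $\sigma|_S$ is a subsequence of $\sigma$, so $\sigma'\preceq\sigma$ and hence $\sigma'\in\C$; moreover $(\sigma',\tau')\preceq(\sigma,\tau)$ genuinely holds by the definition of the pair order, so $(\sigma',\tau')\in\B$ and $\tau'\in\C\B$. With that substitution (and the symmetric argument for $\B\C$) your proof is correct.
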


Following \cite{atkinson:the-permutation:} we define an \emph{allowable pair} $(\sigma,\tau)$ of permutations to be a pair such that a priority queue can generate $\tau$ as an output sequence  if presented with $\sigma$ as an input sequence.  The set $\A$ of allowable pairs is easily seen to be closed downwards in the pair order. Indeed, it is the set of pairs that do not contain either of the pairs $(12,21)$ and $(321, 132)$ \cite{atkinson:priority-queues-and:}; thus Proposition \ref{closed} applies to $\A$.  

This paper studies the relationship between the  pattern classes $\C$ and $\C\A$ for various pattern classes $\C$.  In other words we study the pattern class generated as a set of outputs if a priority queue is presented with permutations from a pattern class $\C$ as its set of inputs.  Usually $\C$ will be defined in terms of its basis and we shall want to find the basis of $\C\A$.
%
%
%

As we shall see $\C\A$ is usually much larger than $\C$.  In the next result we give the precise condition that determines whether $\C=\C\A$.  To state it we recall the weak order on permutations of length $n$ as the transitive closure of permutation pairs $(\alpha, \beta)$ where $\beta$ is obtained from $\alpha$ by interchanging two consecutive values of $\alpha$ in such a way that a new inversion is created.

\begin{theorem}\label{invariant}
Let $\C=\Av(B)$.  Then $\C=\C\A$ precisely when every permutation in the upward weak closure of $B$ contains a permutation
of $B$ as a pattern.
\end{theorem}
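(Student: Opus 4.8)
The plan is to prove both implications by exploiting the fact that the $(12,21)$-component of the allowable relation $\A$ \emph{is} the weak order. First observe that $\C\subseteq\C\A$ always: the pair $(\sigma,\sigma)$ contains neither $(12,21)$ nor $(321,132)$, so $(\sigma,\sigma)\in\A$ and hence $\sigma\in\C\A$ for every $\sigma\in\C$. Thus $\C=\C\A$ is equivalent to $\C\A\subseteq\C$, i.e.\ to the statement that $\sigma\in\C$ and $(\sigma,\tau)\in\A$ force $\tau\in\C$. Next I would record the meaning of the first forbidden pattern: a pair $(\sigma,\tau)$ avoids $(12,21)$ precisely when, for every two values $a<b$ occurring as $b\dots a$ in $\tau$, they also occur as $b\dots a$ in $\sigma$; that is, the inversions of $\tau$ (pairs of values in decreasing order) form a subset of those of $\sigma$. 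Since the covers of the weak order are exactly the interchanges of two adjacent entries that create one new inversion, this says precisely that $\tau\preceq\sigma$ in the weak order. So the $(12,21)$-free pairs are exactly the pairs with $\tau$ weakly below $\sigma$.

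For the forward implication, suppose $\C\neq\C\A$ and fix $\sigma\in\C$ and $\tau\in\C\A\setminus\C$ with $(\sigma,\tau)\in\A$. As $\tau\notin\C$ it contains some basis element $\beta\in B$; let $S$ be the set of values realising this occurrence, so $\tau|_S$ is isomorphic to $\beta$. Because $\A$ is closed downwards in the order on pairs, restricting to $S$ yields an allowable pair $(\alpha,\beta)$, where $\alpha$ is isomorphic to $\sigma|_S$. Here $\alpha\in\C$ since $\alpha\preceq\sigma$ and $\C$ is a pattern class, whereas $\beta\notin\C$, so $\alpha\neq\beta$. The $(12,21)$-freeness of $(\alpha,\beta)$ now reads $\beta\preceq\alpha$ in the weak order; hence $\alpha$ lies in the upward weak closure of $B$ yet avoids every element of $B$, exactly witnessing the failure of the stated condition.

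For the converse I would argue contrapositively. Suppose some $\alpha$ in the upward weak closure of $B$ avoids all of $B$, and choose $\beta\in B$ with $\beta\preceq\alpha$. Take a saturated chain $\beta=w_0\lessdot w_1\lessdot\cdots\lessdot w_m=\alpha$; since $w_0=\beta\notin\C$ while $w_m=\alpha\in\C$, there is a largest index $j$ with $w_j\notin\C$, and then $w_{j+1}\in\C$. The cover $w_j\lessdot w_{j+1}$ interchanges two adjacent entries, turning an ascent $c\,d$ (with $c<d$) into the descent $d\,c$. I then claim $(w_{j+1},w_j)\in\A$: the pair is $(12,21)$-free because $w_j$ has exactly one fewer inversion than $w_{j+1}$, and it avoids $(321,132)$ because any value-triple on which $w_{j+1}$ is isomorphic to $321$ and which uses both swapped positions is forced, after undoing the adjacent swap, into the pattern $312$ or $231$ rather than $132$. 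Hence the priority queue turns the input $w_{j+1}\in\C$ into the output $w_j\notin\C$, so $w_j\in\C\A\setminus\C$ and $\C\neq\C\A$.

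The step I expect to be most delicate is the verification in the converse that the elementary sorting move $(w_{j+1},w_j)$ never realises $(321,132)$; this is precisely what converts the order-theoretic crossing of the boundary of $\C$ into a genuine allowable pair, and it is the only place where the second forbidden pattern intervenes. The rest is bookkeeping: confirming that ``inversion'' is read as a pair of \emph{values} out of order, so that $(12,21)$-freeness and the weak order coincide, and that the restriction to an occurrence in the forward direction is legitimate, which is exactly the downward closure of $\A$ in the order on pairs.
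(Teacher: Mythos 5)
Your proof is correct, but it follows a genuinely different and more self-contained route than the paper's. The paper's own proof is essentially a pair of citations: it observes that $\C=\C\A$ iff $\C=\C\A^*$, invokes a cited result that the transitive closure $\A^*$ is exactly the weak order (so that $\C=\C\A$ iff $\C$ is downward closed in the weak order), and then invokes a second cited result that downward weak-order closure of $\Av(B)$ is equivalent to the stated condition on $B$. You instead argue directly from the forbidden-pair description $\A=\{\mbox{pairs avoiding } (12,21) \mbox{ and } (321,132)\}$, which the paper states in the introduction and which you are entitled to use: in one direction you restrict an allowable pair $(\sigma,\tau)$ to an occurrence of a basis element in $\tau$, using downward closure of $\A$ and $(12,21)$-avoidance to manufacture a permutation in the upward weak closure of $B$ avoiding $B$; in the other you realize each weak-order cover crossing the boundary of $\C$ as an allowable pair, and your three-value case analysis for $(321,132)$-avoidance (the third value lies below, between, or above the swapped adjacent pair, producing $231$, an impossibility, or $312$, never $132$) is exactly right. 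In effect you have inlined proofs of precisely the fragments of the two cited results that the theorem needs ($\A$ lies inside the weak-order relation, weak-order covers lie inside $\A$, and the basis reformulation), which makes your argument verifiable from the paper alone, at the cost of being longer. The one point you should flag as a quoted classical fact rather than something that follows from your remark about covers: the equivalence between containment of (value) inversion sets and comparability in the weak order. Your observation that covers add a single inversion gives only the easy direction (weak comparability implies inversion containment); the converse, which your forward implication actually uses to conclude $\beta\preceq\alpha$ in the weak order from $(12,21)$-avoidance, is standard but does require proof or a reference.
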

\begin{proof}
In the notation of relational composition $\C=\C\A$ if and only if $\C=\C\A^*$ where $\A^*$ denotes the transitive closure of the relation $\A$.  However, by a result of \cite{atkinson:priority-queues-and:}, $\A^*$ is the weak order and so $\C=\C\A$ if and only if $\C$ is closed downwards in the weak order.  But, as shown in \cite{albert:sorting-classes:}, this is equivalent to the condition in the statement of the proposition.
\end{proof}

Put another way, $\C = \C\A$ precisely when $\A = \Av(X)$ for some set $X$ which is upward closed in the weak order.

\begin{corollary}
\label{descending-A}
If $\C=\Av(t\ t-1\ \cdots\ 1)$ then $\C\A=\C$.
\end{corollary}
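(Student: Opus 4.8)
The plan is to apply Theorem~\ref{invariant} directly with the singleton basis $B=\{\delta\}$, where I write $\delta = t\,(t-1)\cdots 21$ for the decreasing permutation of length $t$, so that $\C=\Av(\delta)$. By that theorem, $\C\A=\C$ holds exactly when every permutation lying in the upward weak closure of $B$ contains some element of $B$ as a pattern; since $B$ is a singleton this means: every such permutation contains $\delta$. Thus the whole task reduces to identifying the upward weak closure of the single permutation $\delta$.

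The key observation is that $\delta$ is the greatest element of the weak order on the permutations of length $t$. First I would record this maximality. One way is the inversion count: each covering step of the weak order interchanges two consecutive values so as to create a new inversion, and since nothing lies strictly between two consecutive values, such a step raises the number of inversions by exactly one. Hence the weak order refines the order by inversion count, and $\delta$, which realises all $\binom{t}{2}$ possible inversions, is the unique maximum. Equivalently, the inversion set of $\delta$ is the set of all pairs, so it cannot be properly contained in the inversion set of any permutation of the same length.

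Because the weak order only compares permutations of equal length, the upward weak closure of $B=\{\delta\}$ stays inside the permutations of length $t$, and by maximality it contains no permutation above $\delta$ there either. Therefore the upward weak closure of $B$ is exactly $\{\delta\}$. As $\delta$ trivially contains $\delta$ as a pattern, the condition of Theorem~\ref{invariant} is met and $\C\A=\C$ follows.

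I do not expect a serious obstacle: the corollary is a direct specialisation of Theorem~\ref{invariant}, and the only point needing care is the verification that $\delta$ is the top of the weak order and that upward closures respect length. As a cross-check, one may instead run the argument through the reformulation stated just before the corollary: the set $\{\delta\}$ is already upward closed in the weak order, so $\C=\Av(\{\delta\})$ has the required form $\Av(X)$ with $X$ upward closed, again yielding $\C\A=\C$.
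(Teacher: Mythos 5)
Your proof is correct and is exactly the argument the paper intends: the corollary is stated without proof because it follows immediately from Theorem~\ref{invariant}, and your verification that the decreasing permutation $\delta$ is the unique maximum of the weak order on length-$t$ permutations (so its upward weak closure is just $\{\delta\}$, which trivially contains $\delta$) is the intended justification. The cross-check via the reformulation preceding the corollary is likewise the same observation in different words.
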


Sadly, this result is the only case in which we have managed to determine $\Av(\alpha)\A$ with $|\alpha|>3$ (though we have conjectural descriptions of the bases of these classes for all permutations $\alpha$ of length four).  For $\alpha$ of length 2 we already know from the Corollary that $\Av(21)\A=\Av(21)$ while the other length 2 case is useful enough to be recorded explicitly.

\begin{proposition}\label{decreasing-image}
$\Av(12)\A=\Av(132)$.
\end{proposition}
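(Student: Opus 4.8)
The plan is to exploit the explicit forbidden-pair characterization of $\A$ recalled earlier in the excerpt, namely that $\A$ consists of exactly those pairs containing neither $(12,21)$ nor $(321,132)$ in the pair order. First I would pin down the inputs that actually contribute. The class $\Av(12)$ consists precisely of the decreasing permutations, one of each length, since avoiding $12$ forbids any ascent. Because an allowable pair $(\sigma,\tau)$ necessarily has $|\sigma|=|\tau|$ (a priority queue outputs exactly its inputs, and the pair order is only defined on permutations of a common length $n$), the only $\sigma\in\Av(12)$ that can pair with a given $\tau$ is the decreasing permutation $\delta_n = n\,(n-1)\cdots 1$ with $n=|\tau|$. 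Hence the membership question collapses to: $\tau\in\Av(12)\A$ if and only if $(\delta_n,\tau)\in\A$.

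Next I would test the two forbidden patterns against a decreasing first coordinate, keeping in mind that the pair order is defined via subsets $S$ of \emph{values}: $(\alpha,\beta)\preceq(\sigma,\tau)$ means some value set $S$ has $\sigma|_S\sim\alpha$ and $\tau|_S\sim\beta$. For $(12,21)$ one needs values $a<b$ whose restriction in the first coordinate is the ascent $\sim 12$; but $\delta_n$ has no ascent, so $\delta_n|_{\{a,b\}}\sim 21$ for every pair, and $(12,21)$ can never be contained. Thus this constraint is vacuous. For $(321,132)$ one needs values $a<b<c$ with $\delta_n|_{\{a,b,c\}}\sim 321$ and $\tau|_{\{a,b,c\}}\sim 132$; since $\delta_n$ is decreasing, every triple of values already realizes the pattern $321$ in the first coordinate, so the only real requirement is that some triple realizes $132$ in $\tau$. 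Therefore $(\delta_n,\tau)$ contains $(321,132)$ exactly when $\tau$ contains $132$, and so $(\delta_n,\tau)$ avoids $(321,132)$ exactly when $\tau\in\Av(132)$.

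Combining these, $(\delta_n,\tau)\in\A$ if and only if $\tau$ avoids $132$, which together with the first paragraph yields $\Av(12)\A=\Av(132)$. I do not expect a serious obstacle here: given the stated characterization of $\A$, the argument is essentially bookkeeping. The one place that repays care is correctly unwinding the value-based (rather than position-based) pair containment order and verifying the reduction of the triple pattern $(321,132)$ to ordinary $132$-containment in the output; a clean way to present this is to observe that restricting $\A$ to a fixed decreasing input simply erases the first coordinate of each forbidden pair, leaving the single two-dimensional obstruction $132$. As a sanity check I would confirm the small cases (for $n=3$ the reachable outputs from $321$ are all permutations except $132$), which matches $\Av(132)$.
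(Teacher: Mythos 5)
Your proof is correct, but it takes a genuinely different route from the paper. The paper argues operationally: on a decreasing input stream the ``remove smallest'' rule of a priority queue coincides with ``remove most recently inserted,'' so the queue behaves exactly like a stack, and the identification $\Av(12)\A=\Av(132)$ then follows from the classical theory of stack outputs (outputs of a stack fed a decreasing sequence are precisely the $132$-avoiders). You instead work directly with the forbidden-pair basis of $\A$ quoted in the introduction --- that $\A$ is the set of pairs containing neither $(12,21)$ nor $(321,132)$ --- and observe that fixing the first coordinate to be the decreasing permutation $\delta_n$ makes the pair $(12,21)$ vacuous and collapses $(321,132)$ to ordinary $132$-containment in the output. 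Your unwinding of the value-based pair order is accurate, and the reduction of membership in $\Av(12)\A$ to the single pair $(\delta_n,\tau)$ is the right first step. The trade-off: your argument is pure bookkeeping once one grants the basis $\{(12,21),(321,132)\}$ of $\A$, which is itself a nontrivial result from the cited literature; the paper's argument instead leans on the (equally classical, but independent) stack-sorting theory and has the virtue of explaining \emph{why} the answer is a stack class. Either dependency is legitimate within this paper, so both proofs stand.
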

\begin{proof}
When a decreasing sequence is processed by a priority queue the ``remove smallest'' operation becomes ``remove most recently inserted'' so the behavior is just like being processed by a stack.  The result now follows from the theory of stack permutations.
\end{proof}

In Section \ref{one-length-3} we consider the pattern classes $\Av(\alpha)\A$ for each of the 6 permutations of length 3, show that all of them are finitely based and find their bases. In Section \ref{two-length-3}, we give similar results for $\Av(\alpha,\beta)\A$, where $|\alpha|=|\beta|=3$ and briefly comment on some related results.  Section \ref{infinite-section} contains an example to show that, in general, $\Av(\alpha)\A$ is not necessarily finitely based.  The final section considers briefly the case $\A\C$ and discusses a number of open problems.

Our principal tool is a result from \cite{atkinson:the-permutation:}: Proposition \ref{poset-condition} below.  Before stating this result we need to define, for every sequence of distinct integers $\tau$, a poset $P(\tau)$.  The elements of $P(\tau)$ are precisely the elements of the sequence $\tau$ and the order relation $\prec $ is defined by $x\prec y$ if $x$ precedes $y$ in $\tau$ and either
\begin{itemize}
\item $xy\sim 21$, or
\item $xzy\sim 132$ for some element $z$ lying between $x$ and $y$ in $\tau$.
\end{itemize} 

Another way to think of $P(\tau)$ is to write $\tau=\alpha n\beta$, where $n$ is the largest element of $\tau$,  and then the constraints of $P(\tau)$ are
\begin{itemize}
\item $n\prec b$ for all $b\in\beta$,
\item $a\prec b$ for all $a\in\alpha, b\in\beta$, and
\item constraints of $P(\alpha)$ and constraints of $P(\beta)$.
\end{itemize}

{\bf Example} If $\tau=31524$ then $P(\tau)$ has constraints $5\prec 2$, $5\prec 4$, $\{3,1\}\prec\{2,4\}$, and $3\prec 1$.

If $\tau$ is a permutation then a linear extension of $P(\tau)$ can be considered as a sequence in its own right simply by listing its elements from least to greatest with respect to the linear order extending $\prec$. This turns out to be intimately connected with the concept of allowable pair as shown by the following proposition.

\begin{proposition}\label{poset-condition}
$(\sigma,\tau)$ is an allowable pair if and only if $\sigma$ is a linear extension of $P(\tau)$.
\end{proposition}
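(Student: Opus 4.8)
The plan is to argue both directions directly from the operational definition of a priority queue. I read the phrase ``$\sigma$ is a linear extension of $P(\tau)$'' concretely: since the ground set of $P(\tau)$ is exactly the set of values of $\tau$, and $\sigma$ permutes those same values, $\sigma$ is the left-to-right listing of some linear extension precisely when $\sigma$ is \emph{compatible} with $\prec$, i.e.\ whenever $x\prec y$ in $P(\tau)$ the value $x$ precedes $y$ in $\sigma$. So the goal is to show that $(\sigma,\tau)$ is allowable if and only if $\sigma$ respects every relation $x\prec y$. I model an execution as a sequence of inserts (each feeding the next entry of $\sigma$ into the queue) and removes (each outputting the current minimum), the removed values forming $\tau$; the one governing principle is that at the instant a value is removed it is the least value then held, so any smaller value that has not yet been output must at that instant not yet have been inserted.

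First I would prove necessity. There are two kinds of generating relation. For a relation from $xy\sim 21$ (so $x>y$, with $x$ before $y$ in $\tau$), suppose instead that $y$ precedes $x$ in $\sigma$; then $y$ is inserted before $x$, and since $x$ is output before $y$ the value $y$ is still held when $x$ is removed, contradicting that $x$ is then the minimum. For a relation from $xzy\sim 132$ (so $x<y<z$, with $x,z,y$ in this order in $\tau$), suppose $y$ precedes $x$ in $\sigma$; then $y$ is inserted before $x$, hence before $z$ is output, while $y$ is output only after $z$, so $y$ is held when $z$ is removed, again contradicting minimality of $z$. Thus every generating relation, and hence all of $\prec$, is respected by $\sigma$.

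For sufficiency I would analyse the greedy schedule: to emit $\tau_1,\tau_2,\dots$ in turn, before removing $\tau_k$ insert entries of $\sigma$ in order until $\tau_k$ is present, then remove it. This produces $\tau$ unless it stalls, where a stall at the first such step $k$ means that when we try to remove $y:=\tau_k$ some $w<y$ is also held. Such a $w$ has been inserted but not yet output, so $w$ occurs after $y$ in $\tau$. If $w$ precedes $y$ in $\sigma$, then $y>w$ with $y$ before $w$ in $\tau$ gives $y\prec w$, so $\sigma$ fails to respect $\prec$. Otherwise $w$ follows $y$ in $\sigma$; then $w$ cannot have entered during step $k$, whose insertions stop at $y$'s position and so lie at or before $y$ in $\sigma$, so $w$ was inserted at an earlier step $j<k$ while bringing in $u:=\tau_j$. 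At step $j$ the value $u$ was removed as the minimum while $w$ was already present, so $u<w$, and $w$, lying in the inserted block ending at $u$, precedes $u$ in $\sigma$. Now $u<w<y$, and in $\tau$ the three occur in the order $u,y,w$, so $uyw\sim 132$; this yields $u\prec w$, demanding $u$ before $w$ in $\sigma$ and contradicting $w$ before $u$. In every case $\sigma$ violates $\prec$, so a compatible $\sigma$ never stalls and $(\sigma,\tau)$ is allowable.

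The delicate point, which I would treat most carefully, is the second case of the sufficiency argument. A greedy over-insertion at an earlier step can leave a small blocker $w$ in the queue, and the content of the proof is that the element $u$ whose emission forced $w$ in, together with $w$ and $y$, always realises a $132$ pattern in $\tau$ and hence a genuine relation of $P(\tau)$. Pinning down that $u<w$ (from minimality at step $j$) and that $w$ precedes $u$ in $\sigma$ (from the contiguity of greedy insertions) are exactly the two orderings that convert the stall into a violation, and keeping these straight is where care is required; the necessity direction and the first stall case are routine by comparison.
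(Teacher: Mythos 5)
Your proof is correct, but there is nothing in the paper to compare it against: Proposition~\ref{poset-condition} is imported from the reference \cite{atkinson:the-permutation:} and stated without proof, serving as the paper's ``principal tool.'' Your argument therefore supplies a justification the paper deliberately omits. Both directions check out. The necessity direction correctly handles the two generating relations of $P(\tau)$ (the $21$ case and the $132$ case) by the minimality rule for removals, and since the relation $\prec$ as defined is already transitive (or, at worst, generated by these relations), respecting the generators suffices. The sufficiency direction via the greedy schedule is the genuinely delicate part, and you have it right: at the first stall, a blocking element $w<\tau_k$ either precedes $\tau_k$ in $\sigma$ (yielding a violated $21$-relation $\tau_k\prec w$) or was dragged in by an earlier greedy insertion block ending at some $u=\tau_j$, in which case the orderings $u<w<\tau_k$ in value, $u,\tau_k,w$ in $\tau$, and $w$ before $u$ in $\sigma$ produce a violated $132$-relation $u\prec w$. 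The contiguity of each insertion block (so that $w$ strictly precedes $u$ in $\sigma$) and the legality of step $j$ (so that $u<w$) are exactly the two facts needed, and you isolate them explicitly. One could quibble that you should note the greedy schedule is the only scheduling freedom that matters (any successful execution can be normalized to a greedy one), but your argument does not need this: you prove existence of a valid execution directly, which is all that allowability demands.
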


%

We briefly discuss this result in the context of pattern classes $\C=\Av(\alpha)$.  Suppose $\tau$ is a permutation and that $P(\tau)$ has a chain of the form $a_1\prec  a_2\prec \cdots \prec  a_r$ where $a_1a_2\ldots a_r$ is isomorphic to $\alpha$; for brevity we call this an $\alpha$-chain.  In such a case every linear extension of $P(\tau)$ contains the subsequence $a_1 a_2 \cdots  a_r$ and so none of them are in $\C$.  Therefore $\tau\not\in \C\A$.  So, a necessary condition for $\tau \in \C\A$ is that $P(\tau)$ should contain no $\alpha$-chains. As we shall see in Section \ref{infinite-section} the optimistic hope that this condition is also sufficient often fails.  Nevertheless it does hold (see Section \ref{one-length-3}) when $|\alpha|=3$: in other words we shall prove 

\begin{theorem}\label{chain3}
For any permutation $\alpha$ of length 3,  $\tau \in \Av(\alpha)\A$ if and only if $P(\tau)$ contains no $\alpha$-chain.
\end{theorem}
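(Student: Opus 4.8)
The plan is to read the statement through Proposition \ref{poset-condition}: since $(\sigma,\tau)$ is allowable exactly when $\sigma$ is a linear extension of $P(\tau)$, we have $\tau \in \Av(\alpha)\A$ if and only if \emph{some} linear extension of $P(\tau)$ avoids $\alpha$. The ``only if'' direction is already the observation recorded before the statement: an $\alpha$-chain $a_1 \prec a_2 \prec a_3$ appears, in this order, in every linear extension, so if $P(\tau)$ has an $\alpha$-chain then no extension avoids $\alpha$ and $\tau \notin \Av(\alpha)\A$. All of the content is therefore in the converse: assuming $P(\tau)$ has no $\alpha$-chain, I must \emph{exhibit} a linear extension of $P(\tau)$ that avoids $\alpha$.

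My approach to the construction is an induction along the recursive description of $P(\tau)$. Writing $\tau = \lambda\, n\, \rho$ with $n$ the largest entry, the forced relations are $n \prec r$ and $\lambda_i \prec r$ for every letter $r$ of $\rho$, together with the relations internal to $P(\lambda)$ and $P(\rho)$, while $n$ is incomparable to all of $\lambda$. Thus every linear extension has the shape $u\,\sigma_\rho$, where $\sigma_\rho$ is a linear extension of $P(\rho)$ and $u$ is a linear extension $\sigma_\lambda$ of $P(\lambda)$ into which the single entry $n$ may be inserted at any position. Since every chain of $P(\lambda)$ or $P(\rho)$ is a chain of $P(\tau)$, neither sub-poset has an $\alpha$-chain, so by induction I may take $\sigma_\lambda$ and $\sigma_\rho$ to avoid $\alpha$; the problem is to decide where to place $n$ so that the assembled word still avoids $\alpha$. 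The case $\alpha = 321$ may be quoted directly from Corollary \ref{descending-A} and set aside.

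I would classify a putative occurrence of $\alpha$ in $u\,\sigma_\rho$ by how its three positions distribute between $u$ and $\sigma_\rho$. Occurrences lying wholly in $\sigma_\rho$ are ruled out by the inductive choice of $\sigma_\rho$; those lying wholly in $u$ and not using $n$ are ruled out by the choice of $\sigma_\lambda$; and because $n$ is the global maximum it can only play the role of the largest letter of $\alpha$, so placing $n$ at the front of $u$ prevents it from completing any occurrence whenever the largest letter of $\alpha$ is not its first letter---that is, for every pattern except $312$ and $321$. For the spanning occurrences the forced relations do real work: in $P(\tau)$ every letter of $u$ lies below every letter of $\sigma_\rho$, so a spanning occurrence already realises all of its cross-block comparabilities inside $P(\tau)$, and only a comparability between its two same-block letters is missing from a full $\alpha$-chain.

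The main obstacle is exactly that this missing within-block link need not exist: in a spanning occurrence with two letters in $u$, those two letters may be incomparable in $P(\lambda)$, so the occurrence certifies only a fork rather than a chain, and similarly on the $\rho$ side. I would close this gap by strengthening the induction so that the sub-extensions are built with a fixed tie-break---ordering mutually incomparable entries by decreasing value---which preempts the boundary ascents that a spanning $\alpha$ would require, and then check, pattern by pattern, that any surviving spanning occurrence forces the missing comparability and hence a genuine $\alpha$-chain, contradicting the hypothesis. The residual case $\alpha = 312$, where $n$ cannot be parked at the front, would be handled by the symmetric device of controlling the position of $n$ relative to $\rho$. I expect the verification that the tie-break rule is simultaneously compatible with the forced relations and strong enough to leave no spanning occurrence to be the most delicate part of the argument.
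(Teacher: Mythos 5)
Your setup is sound and matches the paper's: reduce via Proposition \ref{poset-condition} to the statement that a chain-free $P(\tau)$ has an $\alpha$-avoiding linear extension, dispose of the ``only if'' direction by the chain observation, and decompose $\tau = \lambda\, n\, \rho$ so that every extension has the form $u\,\sigma_\rho$ with $n$ inserted into an extension of $P(\lambda)$. You also correctly isolate the real difficulty: a spanning occurrence of $\alpha$ supplies its cross-block comparabilities for free, and only the same-block pair can fail to be comparable. But the single concrete device you offer to close this gap --- the uniform tie-break ``incomparable entries in decreasing value order,'' with $n$ parked at the front --- is not merely delicate to verify; it is false for most patterns. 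Take $\alpha = 132$ and $\tau = 1423$: here $P(\tau)$ has only the relations $1 \prec 2$, $1 \prec 3$, $4 \prec 2$, $4 \prec 3$, so it has no chain of length three at all, yet your construction outputs $n\,\sigma_\lambda\,\sigma_\rho = 4\,1\,3\,2$ (the incomparable pair $\{2,3\}$ placed decreasingly), which contains $132$; the witnessing extension is $4123$, where that pair appears in \emph{increasing} order. The same failure occurs for $\alpha = 213$ with $\tau = 1243$ (your construction gives $4213$, which contains $213$; $4123$ works) and for $\alpha = 231$ with $\tau = 2413$ (you get $4231$, which contains $231$; $4213$ works). In each case the surviving spanning occurrence genuinely does \emph{not} force the missing comparability, so the ``pattern by pattern check'' you defer cannot succeed with this tie-break. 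Moreover your device for the residual case $\alpha = 312$ --- ``controlling the position of $n$ relative to $\rho$'' --- is vacuous, since $n \prec r$ for every $r \in \rho$ forces $n$ before all of $\rho$ in every linear extension; what $312$ actually requires is that the entire suffix after $n$ be decreasing, a much stronger and non-local condition.

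This is precisely why the paper's proof of Theorem \ref{chain3} occupies all of Section \ref{one-length-3} with five pattern-specific constructions that order different parts in different directions: $312$ is handled non-inductively via left-to-right maxima with decreasing blocks; $132$ (Proposition \ref{prop132}) and $213$ partition the values of $\lambda$ and $\rho$ into intervals and arrange some parts decreasingly and others \emph{increasingly}; $231$ uses left-to-right maxima inductively; and $123$ uses an extremal exchange argument among extensions $n\lambda\mu$ maximizing inversions. To your credit, your tie-break does work for $\alpha = 123$ (in an extension where all incomparable pairs descend, any ascent is a comparability, so a $123$ occurrence is a $123$-chain), and that is arguably cleaner than the paper's exchange argument for that one case --- but it is one case out of five, and the other four require exactly the tailored constructions your proposal omits.
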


The proof of this theorem is contained in Section \ref{one-length-3} and consists basically of a case by case analysis of the classes $\Av(\alpha)\A$.

Note that the condition that $P(\tau)$ contains no $\alpha$-chain can be captured by a finite  set of avoidance conditions:

\begin{lemma}\label{chain-constraints} $P(\tau)$ has a chain $a_1\prec a_2\prec \cdots a_k$ if and only if $\tau$ has a subsequence $a_1b_1a_2b_2\cdots a_{k-1}b_{k-1}a_k$ such that
\begin{itemize}
\item if $a_i>a_{i+1}$ then $b_i$ is the empty term,
\item if $a_i<a_{i+1}$ then $b_i>a_{i+1}$
\end{itemize}
\end{lemma}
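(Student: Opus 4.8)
The plan is to prove both implications by unwinding the definition of $\prec$ one consecutive pair at a time, and then, in the forward direction, to stitch the per-pair witnesses together into a single subsequence. Recall that $a_i\prec a_{i+1}$ holds in $P(\tau)$ because one of two clauses applies: either $a_i$ precedes $a_{i+1}$ in $\tau$ with $a_i>a_{i+1}$ (the $21$ clause), or $a_i$ precedes $a_{i+1}$ with $a_i<a_{i+1}$ and some element $z$ lying between them in $\tau$ satisfies $z>a_{i+1}$ (the $132$ clause). The two bullet conditions in the statement are precisely the positional and value signatures of these two clauses, with an empty $b_i$ encoding the descent case and a large intermediate $b_i$ encoding the ascent case.

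For the direction $(\Leftarrow)$ I would assume the subsequence $a_1b_1a_2\cdots a_k$ exists and read off, for each $i$, which clause applies. If $a_i>a_{i+1}$ then, since $a_i$ precedes $a_{i+1}$ in $\tau$, we get $a_ia_{i+1}\sim 21$ and hence $a_i\prec a_{i+1}$. If $a_i<a_{i+1}$ then $b_i$ occurs strictly between $a_i$ and $a_{i+1}$ in $\tau$ with $a_i<a_{i+1}<b_i$, so $a_ib_ia_{i+1}\sim 132$ and again $a_i\prec a_{i+1}$. Taken together these relations give the chain $a_1\prec a_2\prec\cdots\prec a_k$.

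For the direction $(\Rightarrow)$ I would start from the chain and, for each consecutive relation $a_i\prec a_{i+1}$, choose a witness: take $b_i$ empty when $a_i>a_{i+1}$, and take $b_i$ to be an element lying strictly between $a_i$ and $a_{i+1}$ in $\tau$ with $b_i>a_{i+1}$ when $a_i<a_{i+1}$ (such a $b_i$ exists by the $132$ clause). The step I regard as the real content of the lemma is verifying that these locally chosen terms actually line up as a single subsequence of $\tau$. This follows from the nesting of positions: writing $p(\cdot)$ for position in $\tau$, each relation forces $p(a_i)<p(a_{i+1})$, while each nonempty witness satisfies $p(a_i)<p(b_i)<p(a_{i+1})$; hence the full list of chosen terms occurs in $\tau$ in exactly the order $a_1,b_1,a_2,b_2,\dots,a_k$, with all positions distinct and strictly increasing.

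The one subtlety I would flag is the interaction between $\prec$ and transitivity: a chain relation $a_i\prec a_{i+1}$ must be realized by one of the two defining clauses, not merely inferred from a longer comparability. This causes no trouble, because $\prec$ is defined directly by those clauses, and a short check shows that the generating relation is already transitive (if $a$ precedes $c$ precedes $b$ with $a\prec c$ and $c\prec b$, then either $c>b$ gives a large element between $a$ and $b$, or $c<b$ forces a witness above $b$ between $c$ and $b$, and in either case a defining clause for $a\prec b$ appears). Consequently no extra comparabilities are introduced, each consecutive step of the chain is itself a defining relation, and the case analysis above applies verbatim.
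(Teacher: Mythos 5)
Your proof is correct and takes essentially the same route as the paper's: one direction reads the two defining clauses of $\prec$ directly off the subsequence, and the other chooses a clause witness $b_i$ for each ascent and observes that the positions nest so the chosen terms form a single subsequence of $\tau$. Your additional verification that the defining relation is already transitive (so every consecutive chain relation is realized by a clause rather than by a longer comparability) is a point the paper leaves implicit, but it is correct and does not change the substance of the argument.
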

\begin{proof}
Suppose first that the two conditions hold.  The conditions are precisely those that ensure $a_1\prec  a_2\cdots \prec  a_k$ are constraints of $P(\tau)$.

Conversely suppose that $P(\tau)$ has a chain $a_1\prec  a_2\prec \cdots a_k$.  By definition $a_1, a_2,\ldots, a_k$ appear in this order within $\tau$.  A constraint of the form $a_i\prec a_{i+1}$ with $a_i<a_{i+1}$ can arise only because of some term $b_i>a_{i+1}$ appearing between $a_i$ and $a_{i+1}$.
\end{proof}

In particular it  follows that $P(\tau)$ has no $\alpha$-chain if and only if $\tau$ contains no permutation with the two properties specified in this lemma.  The set of such permutations is easy to compute once $\alpha$ is known.  For example, if $\alpha=123$, we seek permutations $axbyc$ with $a<b<c$, $x>b$ and $y>c$; these are 13254, 14253, 15243.

%

\section{$\Av(\alpha)\A$ when $|\alpha|=3$}
\label{one-length-3}

In this section we verify Theorem \ref{chain3} by a case analysis.  As a consequence of Lemma \ref{chain-constraints} and the remarks that followed it we obtain the basis of $\Av(\alpha)\A$ for each of the 6 permutations $\alpha$ of length 3.  The case $\alpha = 321$ is already covered by Corollary \ref{descending-A}. In each remaining case we obtain the basis of the class $\Av(\alpha)\A$ as a corollary to the main result, using the observation at the end of the preceding section.

Note that the symmetries of the pattern containment order cannot be exploited here since they are not symmetries of the relation $\A$. Essentially this is because the operation of a priority queue depends on both the order in which it receives (and outputs) elements and their relative sizes. No non-trivial symmetry of the pattern containment order respects both these relationships. 

\begin{proposition}
If $P(\tau)$ has no 312-chain then it has a 312-avoiding linear extension.
\end{proposition}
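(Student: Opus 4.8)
The plan is to induct on the length of $\tau$ using the decomposition $\tau=\alpha n\beta$ with $n$ the largest element, building an explicit linear extension as I go. First I would record the structural facts from the recursive description of $P(\tau)$: every element of $\alpha\cup\{n\}$ precedes every element of $\beta$, while $n$ is incomparable to each element of $\alpha$. Thus any linear extension has the block form (an extension of $\alpha\cup\{n\}$) followed by (an extension of $P(\beta)$). Note also that $P(\alpha)$ and $P(\beta)$ inherit the no-$312$-chain property, since any chain in a sub-poset is a chain in $P(\tau)$, so the inductive hypothesis applies to each.

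The first key observation is that the hypothesis pins down the $\beta$-block. If $P(\beta)$ contained a relation $u\prec v$ with $u<v$, then since $n\prec u$ we would have the chain $n\prec u\prec v$, isomorphic to $312$; so no such relation exists, every relation of $P(\beta)$ runs from a larger value to a smaller one, and therefore listing the elements of $\beta$ in decreasing order of value is a valid linear extension of $P(\beta)$. I would take this decreasing listing for the $\beta$-block, place $n$ at the very end of the $\alpha\cup\{n\}$-block (immediately before $\beta$), and supply the $\alpha$-part recursively. Call the resulting extension $\sigma$.

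Next I would verify that $\sigma$ avoids $312$ by examining where the three entries of a hypothetical $312$ could fall among the three blocks ($\alpha$-part, then $n$, then decreasing $\beta$). Most distributions are immediately impossible: the $\beta$-block is decreasing, so it contains no ascent and no $312$; the $\alpha$-part avoids $312$ by induction; and $n$, being the global maximum, can only play the role of the ``$3$'', which would force an ascent in the following $\beta$-block. The single surviving danger is a ``$31$'' inside the $\alpha$-part, namely an inversion $x>y$ with $x$ before $y$, completed by a $\beta$-value $c$ with $y<c<x$ in the ``$2$'' position.

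To rule this out I would prove, by the same induction, the auxiliary statement that whenever $\sigma$ places $x$ before $y$ with $x>y$, the poset $P(\tau)$ contains some $w\prec y$ with $w\ge x$. This is easy to maintain: if the inversion lies in the $\beta$-block, or straddles it, one simply takes $w=n$; an inversion inside the recursively built $\alpha$-part is handled by the inductive hypothesis, with the local maximum of $\alpha$ as the witness. Granting this, the surviving danger dissolves, for the inversion $x>y$ inside the $\alpha$-part yields $w\prec y$ with $w\ge x>c$, and since $y\prec c$ (as $y\in\alpha$ and $c\in\beta$) we obtain a chain $w\prec y\prec c$ isomorphic to $312$, contradicting the hypothesis. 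I expect the main conceptual step to be exactly this interaction across the $\alpha$/$\beta$ boundary; once it is isolated, the auxiliary lemma, with the maximal element serving as a universal witness, is the crux that lets the induction close.
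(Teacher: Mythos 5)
Your proof is correct, and in fact the witness you construct is the very same permutation the paper uses: unrolling your recursion $\sigma(\tau)=\sigma(\alpha)\,n\,(\beta\text{ in decreasing order})$ keeps every left-to-right maximum of $\tau$ in its place and sorts each intervening block of non-maxima decreasingly, which is exactly the paper's $\lambda=m_1\lambda_1 m_2\lambda_2\cdots m_k\lambda_k$. Where you genuinely diverge is in the verification. The paper argues globally and directly: a violated constraint $x\prec y$ with $x<y$ inside a block, or a 312 pattern $zxy$ straddling two blocks $\tau_i$ and $\tau_j$, immediately yields the 312-chain $m_i\prec x\prec y$, since the maximum $m_j$ lying between $x$ and $y$ in $\tau$ forces $x\prec y$; no induction is needed. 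You instead induct on the decomposition $\tau=\alpha n\beta$ and strengthen the inductive hypothesis with the auxiliary lemma that every inversion $x\cdots y$ of the constructed extension carries a poset witness $w\prec y$ with $w\ge x$; the one dangerous case (a ``31'' inside the $\alpha$-part completed by a $\beta$-value $c$) then produces the chain $w\prec y\prec c$ with $w\ge x>c>y$, a 312-chain, exactly as you say. All of your structural checks are sound: $n$ is indeed incomparable to $\alpha$, everything in $\alpha\cup\{n\}$ lies below everything in $\beta$, the relation $n\prec u\prec v$ forces $P(\beta)$ to have only descending relations, and your case analysis of where a 312 could sit is exhaustive. Your parenthetical that the witness for an inversion inside the $\alpha$-part is ``the local maximum of $\alpha$'' is slightly loose---the inductive hypothesis merely hands you \emph{some} witness (unrolled, it is a left-to-right maximum)---but nothing in the argument depends on which witness it is. The trade-off between the two routes: the paper's left-to-right-maxima framing makes the witnesses visible at once, giving a two-paragraph direct check, while your approach pays for the max-decomposition with the auxiliary invariant, but isolates cleanly the single fact (inversions carry witnesses dominating them) that makes such inductions close, a fact that would transfer to other recursive constructions of linear extensions.
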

\begin{proof}
Put $\tau=m_1\tau_1m_2\ldots m_k\tau_k$ where $m_1,\ldots,m_k$ are the left-to-right maxima of $\tau$.  Let $\lambda_i$ be the sequence of values in $\tau_i$ but written in decreasing order and put $\lambda=m_1\lambda_1m_2\lambda_2\ldots m_k\lambda_k$.  Then $\lambda$ is the required 312-avoiding linear extension.  To verify that $\lambda$ is a linear extension note that all the constraints of $P(\tau)$ are respected because the only doubt would be over two elements $x,y\in\tau_i$ with $x<y$ and $x\prec y$.  But then $m_i\prec x\prec y$ would be a 312-chain in $P(\tau)$.  Finally $\lambda$ avoids 312 for, if there were a sequence $zxy\sim 312$ in $\lambda$, then $x$ and $y$ would necessarily lie in distinct $\tau_i$ and $\tau_j$. Then, because of the sequence $xm_jy$ in $\tau$ we would have $x\prec y$ and hence $m_i\prec x\prec y$ would be a 312-chain in $P(\tau)$.
%
\end{proof}
\begin{corollary}
$\Av(312)\A=\Av(3142, 4132)$.
\end{corollary}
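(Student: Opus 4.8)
The plan is to combine the Proposition just proved with the general observations preceding Theorem \ref{chain3} to obtain the chain characterization for $\alpha = 312$, and then to convert that characterization into an explicit basis via Lemma \ref{chain-constraints}.

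First I would establish that $\tau \in \Av(312)\A$ if and only if $P(\tau)$ contains no 312-chain. The forward direction is immediate from the remarks before Theorem \ref{chain3}: a 312-chain in $P(\tau)$ forces every linear extension of $P(\tau)$, hence every $\sigma$ with $(\sigma,\tau) \in \A$ (by Proposition \ref{poset-condition}), to contain 312, so no such $\sigma$ lies in $\Av(312)$ and $\tau \notin \Av(312)\A$. The converse is exactly the content of the preceding Proposition: if $P(\tau)$ has no 312-chain it admits a 312-avoiding linear extension $\lambda$, and then $(\lambda,\tau)\in\A$ with $\lambda\in\Av(312)$ witnesses $\tau \in \Av(312)\A$. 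Together these give the instance of Theorem \ref{chain3} for $\alpha = 312$.

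Next I would apply Lemma \ref{chain-constraints} with $k=3$ and $a_1 a_2 a_3 \sim 312$ to describe which $\tau$ contain a 312-chain. Since $a_1 > a_2$ the intermediate term $b_1$ must be empty, and since $a_2 < a_3$ the term $b_2$ must satisfy $b_2 > a_3$. Thus $P(\tau)$ has a 312-chain precisely when $\tau$ has a subsequence $a_1 a_2 b_2 a_3$ with $a_1 > a_3 > a_2$ and $b_2 > a_3$. The only remaining freedom is the position of $b_2$ relative to $a_1$: if $b_2 > a_1$ the four values pattern as 3142, while if $a_3 < b_2 < a_1$ they pattern as 4132. Hence $P(\tau)$ contains a 312-chain if and only if $\tau$ contains 3142 or 4132.

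Combining the two steps yields $\Av(312)\A = \Av(3142, 4132)$, and since 3142 and 4132 are distinct permutations of equal length, neither contains the other, so $\{3142,4132\}$ is genuinely minimal and is the basis. The only care needed is the case split for $b_2$ in the penultimate step; everything else follows mechanically from the quoted results, so I expect that short enumeration to be the only---and quite minor---obstacle.
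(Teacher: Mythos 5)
Your proposal is correct and follows exactly the paper's (implicit) route: the Proposition supplies the 312-chain characterization of $\Av(312)\A$, and Lemma \ref{chain-constraints} with the observation at the end of Section \ref{intro} converts the chain condition into the avoidance of the patterns obtained from $a_1a_2b_2a_3$, namely 3142 and 4132. Your case split on the position of $b_2$ relative to $a_1$ is the same short enumeration the paper relies on, so there is nothing to add.
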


\begin{proposition}\label{prop132}
If $P(\tau)$ has no 132-chain then it has a 132-avoiding linear extension.
\end{proposition}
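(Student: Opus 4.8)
The plan is to mirror the proof of the preceding proposition for $312$, but to replace its ``reverse each block'' recipe by one adapted to $132$. Write $\tau=m_1\tau_1 m_2\tau_2\cdots m_k\tau_k$ with left-to-right maxima $m_1<\cdots<m_k$, and let $N$ be the subsequence of all non-maxima (the concatenation of the $\tau_i$). My candidate extension is $\lambda=m_k\,m_{k-1}\cdots m_1\,\nu$, where the maxima are listed in \emph{decreasing} order and $\nu$ is the linear extension of the induced order $P(\tau)|_N$ produced by repeatedly emitting the \emph{smallest} currently available (minimal) non-maximum, i.e.\ $N$ made as increasing as its constraints permit. The guiding intuition is that $132$-avoiding sequences want large values early and small values arranged increasingly, which is the opposite of the $312$ recipe.

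First I would check that $\lambda$ is a linear extension of $P(\tau)$, which needs only two observations. The maxima are pairwise incomparable: for $i<j$ we have $m_i<m_j$, and since nothing preceding $m_j$ exceeds it there is no $132$-witness, so neither $m_i\prec m_j$ nor $m_j\prec m_i$; hence any ordering of them, in particular the decreasing one, is legitimate. Moreover every comparability between a maximum $m$ and a non-maximum $b$ has $m$ below: if $m$ precedes $b$ in $\tau$ then either $m>b$ (a $21$) or the maximum responsible for $b$ lies between them as a $132$-witness, so $m\prec b$; and if $b$ precedes $m$ they are incomparable. Thus no non-maximum is ever forced before a maximum, and placing all the maxima first, followed by any extension $\nu$ of $P(\tau)|_N$, respects $\prec$.

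For avoidance, I would first argue that no maximum can be the apex of a $132$ in $\lambda$: everything standing before a given $m_i$ in $\lambda$ is a \emph{larger} maximum, so $m_i$ has no smaller element to its left and cannot head the peak; a parallel check excludes it from the final ``$2$'' role, leaving the initial ``$1$'' as its only possible part. Consequently the apex of any putative $132$ lies in the block $\nu$, and its ``$1$'' is either another non-maximum (yielding a $132$ inside $\nu$) or a small maximum, the extreme case being $m_1=\tau_1$. Everything therefore reduces to choosing $\nu$ so as to contain no internal $132$ and no inversion $b\cdots c$ (with $b>c$) whose smaller term satisfies $c>\tau_1$. This is exactly where the hypothesis bites: a \emph{forced} such inversion $b\prec c$ with $\tau_1<c<b$ would combine with $\tau_1\prec b$ and $\tau_1\prec c$ (each witnessed by the maximum sitting above $b$, respectively $c$) to give the chain $\tau_1\prec b\prec c$, which is a $132$-chain, contrary to assumption. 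Hence the offending descents are never order-forced, and the greedy increasing rule is free to avoid them.

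The step I expect to be the genuine obstacle is this last one: proving that a \emph{single} ordering $\nu$ simultaneously avoids an internal $132$ and every dangerous inversion while respecting $P(\tau)|_N$. The difficulty is that $P(\tau)|_N$ is an induced suborder and may carry relations witnessed by maxima lying \emph{outside} $N$, so one cannot simply recurse on a shorter permutation. Instead I would argue by induction on $|N|$, using the subsequence description of chains in Lemma \ref{chain-constraints}: each time the greedy rule is driven into a descent one must show the two elements are $\prec$-comparable and too small to open a bad pattern, every failure again being exhibited as an explicit $132$-chain anchored at an appropriate left-to-right maximum. Carrying out this bookkeeping, and checking that the resulting $\nu$ itself inherits the no-$132$-chain property so that the induction is well founded, is the crux of the proof; the preliminary structural facts above are routine by comparison.
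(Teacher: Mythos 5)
Your structural preliminaries are correct: the left-to-right maxima form an antichain in $P(\tau)$, every comparability between a maximum and a non-maximum has the maximum below, and a forced inversion between two non-maxima both exceeding $m_1$ would yield a 132-chain through $m_1$ (since $m_1 \prec b$ for every non-maximum $b$). The reduction is also correctly identified: it suffices to build a linear extension $\nu$ of $P(\tau)|_N$ with no internal 132 and no inversion whose smaller entry exceeds $m_1$. But the proof stops exactly where the real work begins, and you say so yourself: constructing such a $\nu$ is deferred as ``the crux'' and ``bookkeeping''. That deferred step is a genuine gap, not a routine verification, for two reasons. First, your induction is not well-founded as set up. As you note, $P(\tau)|_N$ can carry relations witnessed by maxima outside $N$ (in $\tau = 2143$ the relation $1 \prec 3$ is witnessed only by the maximum $4$), so it need not equal $P(\sigma)$ for any permutation $\sigma$; hence the proposition being proved cannot be invoked recursively on it. You would need to state and prove a strictly stronger result about a class of value-labelled posets closed under your decomposition, and no such statement is formulated. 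Second, the specific greedy rule (always emit the smallest available element) is the wrong instinct for 132: a 132 is an increasing pair dominated by an intervening larger element, and making the output ``as increasing as its constraints permit'' manufactures exactly such configurations. On the value-labelled poset with elements $1,2,3,4$ and the single relation $4 \prec 2$ --- which has no 132-chain --- greedy-smallest outputs $1342$, containing $132$, even though the extension $4321$ is available. So the no-132-chain hypothesis alone cannot justify your rule; its correctness would have to lean on additional structure of the induced posets, which is precisely the part you never establish. There is also a real tension your sketch glosses over: the elements above $m_1$ must appear in increasing order (to protect against a maximum playing the role of the ``1''), while the elements below $m_1$ must avoid increasing pairs separated by larger entries; a single rule that forces increase everywhere serves the first demand at the expense of the second.

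For contrast, the paper sidesteps both difficulties by decomposing around the maximum \emph{value} rather than the left-to-right maxima: writing $\tau = \alpha n \beta$, every witness to a constraint between two elements of $\alpha$ (or two of $\beta$) lies on the same side of $n$, so the constraints of $P(\tau)$ inside $\alpha$ are exactly those of $P(\alpha)$, and induction applies verbatim to $\alpha$ and to parts of $\beta$. The inductive extensions are then combined not by a greedy rule but by a value-interval partition (the $X_i$ and $Y_i$ blocks), with the block order chosen so that the no-132-chain hypothesis directly rules out the cross-block patterns. If you want to salvage your approach, the missing ingredient is an explicit construction of $\nu$ (together with a poset-level induction hypothesis that survives restriction to $N$); as it stands, the argument proves only the easy outer layer of the proposition.
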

\begin{proof}
Suppose that $\tau=\alpha n\beta$ where $n$ is the maximum value occurring in $\tau$. We suppose inductively that the result is true for all permutations of length less than $n$ (observing that the base case of $n = 1$ is trivial).

If $\alpha$ is empty, then we can simply (by induction) take $n$ followed by a 132-avoiding linear extension of $P(\beta)$ to obtain a 132-avoiding linear extension of $\tau$. So, henceforth assume that $\alpha$ is non-empty. Partition the values occurring in $\alpha$ into an increasing sequence of non-empty intervals $X_1$, $X_2$, \ldots, $X_k$, such that if $x_i \in X_i$ and $x_{i+1} \in X_{i+1}$, then some element of $\beta$ lies between them in value. Similarly partition the elements of $\beta$ into intervals $Y_i$ for $0 \leq i \leq k+1$ such that the elements of $Y_i$ lie above $X_i$ and below $X_{i+1}$ (with the obvious modifications for $Y_0$ and $Y_{k+1}$). Note that all the $Y_i$ except possibly $Y_0$ and $Y_{k+1}$ are non-empty.

Consider the constraints of $P(\tau)$ between elements of $\alpha$.  Such constraints  arise precisely from the constraints of $P(\alpha)$.  By induction we may find a 132-avoiding linear extension of $P(\alpha)$ and it will not violate any constraints within the super-poset $P(\tau)$.  In this linear extension let $\lambda_i$ be the subsequence whose values come from $X_i$.

There are no poset constraints of the form $\ell_i\prec \ell_j$ with $i<j$, $\ell_i\in\lambda_i$, $\ell_j\in\lambda_j$ because, with $y \in Y_i$, $\ell_1 \prec \ell_2 \prec y$ would be a 132-chain of $P(\tau)$.  So, in fact, we may take the linear extension of $P(\alpha)$ to have the form $\lambda=\cdots\lambda_3 \lambda_2 \lambda_1$ (in rearranging the $\lambda_i$ in this way we cannot introduce a 132-subsequence since they now form a descending sequence of intervals of values, so the only possible 132 occurrences would be within a single $\lambda_i$ and we know already that this does not take place).


The values of $Y_1\cup Y_2\cup\ldots$ occur in increasing order in $\beta$ (because if two of them $z,y$ say occur in decreasing order then, with $x \in X_1$, we would have a 132-chain $x \prec z \prec y$).  Hence these values also form an antichain of values in $P(\tau)$ because there are no intervening larger values.

Furthermore there is no constraint $y_0 \prec y$ in $P(\tau)$ with $y_0 \in Y_0$ and $y \in Y_i$, $i>0$ as such a constraint could arise only from some intervening term of $Y_1 \cup Y_2 \cup\ldots$ larger than $y$ contradicting the previous observation that the elements of $Y_1 \cup Y_2 \cup \ldots$ occur in increasing order.  Hence, (using induction again) taking $\mu_0$ to be a 132-avoiding linear extension of $P(Y_0)$, and $\mu'$ to be the terms of $Y_1 \cup Y_2 \cup \ldots$ in increasing order, $\mu = \mu' \mu_0$ is a  linear extension of $P(\beta)$ that obviously avoids 132.

Now it follows that $n \lambda \mu$ is a 132-avoiding linear extension of $P(\tau)$.  The check is routine.  Clearly any 132-patterns must lie across $\lambda$ and $\mu$.  But one element in $\lambda$ and two elements in $\mu$ is impossible because the elements on $\mu$ would have to occur in $\mu'$ (being larger than some element of $\alpha$) and $\mu'$ is increasing. Likewise, two elements in $\lambda$ and one in $\mu$ is impossible because the first two elements would have to lie in some common $\lambda_i$ and the third could not separate them by value.
\end{proof}

\begin{corollary}
$\Av(132)\A=\Av(1432)$.
\end{corollary}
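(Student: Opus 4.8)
The plan is to combine the equivalence just established with the explicit translation of chain conditions into pattern-avoidance conditions provided by Lemma \ref{chain-constraints}. Proposition \ref{prop132}, together with the necessary-condition remark preceding Theorem \ref{chain3} (an $\alpha$-chain in $P(\tau)$ forces every linear extension to contain $\alpha$, so $\tau \notin \Av(\alpha)\A$), already gives that $\tau \in \Av(132)\A$ precisely when $P(\tau)$ contains no 132-chain. Thus the only remaining task is to rewrite the condition ``$P(\tau)$ has no 132-chain'' as a single pattern-avoidance condition on $\tau$, exactly as was illustrated for $\alpha = 123$ at the end of Section \ref{intro}.

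First I would set up the 132-chain $a_1 \prec a_2 \prec a_3$ with $a_1 a_2 a_3 \sim 132$, so that $a_1 < a_3 < a_2$. Then I would apply Lemma \ref{chain-constraints} step by step. The step $a_1 \prec a_2$ is ascending (since $a_1 < a_2$), so it requires a bridging element $b_1 > a_2$; the step $a_2 \prec a_3$ is descending (since $a_2 > a_3$), so the corresponding $b_2$ is empty. This produces the subsequence $a_1 b_1 a_2 a_3$ subject to $a_1 < a_3 < a_2 < b_1$, which is isomorphic to $1432$.

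Hence $P(\tau)$ contains a 132-chain if and only if $\tau$ contains $1432$; equivalently, $P(\tau)$ contains no 132-chain if and only if $\tau \in \Av(1432)$. Combined with the equivalence above, this yields $\Av(132)\A = \Av(1432)$, and since the forbidden set produced by the lemma is the single permutation $1432$, the basis is exactly $\{1432\}$.

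Because the substantive work was already carried out in Proposition \ref{prop132}, there is no genuine obstacle here: the argument reduces to a direct finite computation via Lemma \ref{chain-constraints}. The only point demanding care is the bookkeeping in applying the lemma---correctly distinguishing the ascending step (which inserts a bridging value $b_1$ exceeding $a_2$) from the descending step (which inserts nothing)---so that one arrives at precisely the pattern $1432$ and can confirm that no smaller or additional basis elements arise.
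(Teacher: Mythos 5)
Your proof is correct and is exactly the paper's intended route: the paper derives this corollary from Proposition \ref{prop132} (together with the $\alpha$-chain necessity remark and Proposition \ref{poset-condition}) and then translates ``no 132-chain'' into pattern avoidance via Lemma \ref{chain-constraints}, just as you do. Your bookkeeping is also right---the ascending step forces $b_1 > a_2$ and the descending step needs no bridge, giving the unique pattern $1432$.
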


\begin{proposition}
If $P(\tau)$ has no 231-chain then it has a 231-avoiding linear extension.
\end{proposition}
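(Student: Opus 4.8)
The plan is to induct on the length of $\tau$, closely following the proof of Proposition \ref{prop132}. Write $\tau=\alpha n\beta$ with $n$ the maximum value. When $\alpha$ is empty the extension $n$ followed by a $231$-avoiding linear extension of $P(\beta)$ (obtained by induction) works, since $n$ is first and largest and so can never sit in a $231$ pattern. So assume $\alpha$ is nonempty. I would first record the shape forced on every linear extension: the constraints $n\prec b$ and $a\prec b$ for $a\in\alpha,\ b\in\beta$ force the form $n\,\lambda\,\mu$, where $\lambda$ lists $\alpha$ and $\mu$ lists $\beta$, and since $n$ is incomparable to every element of $\alpha$ it may indeed be placed first. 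With $n$ first there are no $231$ patterns involving $n$, so the task reduces to avoiding $231$ inside $\lambda$, inside $\mu$, and across the two blocks. A cross pattern can only be of two kinds: \emph{type I}, two elements of $\alpha$ in increasing value order followed by a smaller element of $\beta$; or \emph{type II}, one element of $\alpha$ preceding two elements of $\beta$ that appear in decreasing value and straddle it in value.

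Next I would extract the consequences of having no $231$-chain. Put $m=\min\beta$ and call an element of $\alpha$ \emph{high} if it exceeds $m$ and \emph{low} otherwise. Three facts drive the argument. (1) If $u\prec v$ with $u<v$ and both high, then $u\prec v\prec m$ is a $231$-chain; hence every constraint among the high elements decreases in value, and the high elements can be listed in strictly decreasing order. (2) If $y,z\in\beta$ with $z<x<y$ for some $x\in\alpha$, then ordering $y$ before $z$ would produce the chain $x\prec y\prec z$, a $231$-chain; so such ``separated'' $\beta$-pairs must be listed smaller-first, and the same computation shows $P(\beta)$ never forces the reverse. (3) No high element can be forced between a decreasing pair of low elements, since $p\prec q\prec r$ with $p,r$ low, $q$ high and $r<p$ is again a $231$-chain.

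Constructing $\mu$ is then clean and mirrors Proposition \ref{prop132}. Take any $231$-avoiding extension of $P(\beta)$ by induction and reorder its maximal runs of values into increasing blocks, the blocks being the $\beta$-values with no intervening $\alpha$-value. Fact (2) shows the reordering respects $P(\beta)$, and because the blocks now increase in value a short case check (of the type in the last paragraph of Proposition \ref{prop132}) shows no $231$ is introduced and that every separated pair is smaller-first; this eliminates all type II cross patterns. Listing the high elements of $\alpha$ in decreasing order, using fact (1), kills all type I cross patterns, since any two high elements then appear larger-first.

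The remaining and hardest step is to build $\lambda$ after $n$: I must interleave the high elements (in their forced decreasing order) with the low elements so that the result is a genuine linear extension of $P(\alpha)$, the low elements themselves avoid $231$, and no high element falls between a decreasing pair of low elements, which is the only surviving internal obstruction. The main obstacle is that $P(\alpha)$ may force a high element before a low one \emph{and} a low element before a high one, so the high and low parts cannot be separated; the decreasing high sequence must be threaded through the low elements. I expect to resolve this by a secondary induction on the low block, using fact (3): since no high element is ever \emph{compelled} to lie between a descending low pair, at each stage there is room to insert the next high value without creating the forbidden configuration, while fact (1) keeps the high values descending. Once $\lambda$ and $\mu$ are in hand, a final routine verification of the same flavour as the closing paragraph of Proposition \ref{prop132} confirms that $n\,\lambda\,\mu$ contains no $231$ pattern, within either block or across them.
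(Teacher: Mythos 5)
Your setup is sound: the reduction to the form $n\lambda\mu$, the three consequences of having no 231-chain, the construction of $\mu$ by rearranging the blocks of $\beta$-values in increasing order (mirroring Proposition \ref{prop132}), and the enumeration of the surviving obstructions (lows avoid 231, highs descend, no high between a descending pair of lows, and the two cross types) are all correct. But the proof is not complete: the construction of $\lambda$, which you yourself call ``the remaining and hardest step,'' is only asserted, not carried out. The sentence ``at each stage there is room to insert the next high value without creating the forbidden configuration'' is exactly the claim that needs proof, and fact (3) does not deliver it. Fact (3) says that $P(\alpha)$ never \emph{forces} a high element between a specific descending pair of lows; it does not say that, once you have committed to a particular 231-avoiding arrangement of the low elements (which is all the inductive hypothesis hands you, with no control over where the highs sit in it), every high element has an admissible slot. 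A high element $h$ can be pinned by constraints $p\prec h\prec r$ with $p<r$ low into a window of your chosen low arrangement, and every position in that window may lie inside a descent of lows, so that placing $h$ requires rearranging the lows as well --- and showing that this rearrangement can always be done consistently, for all highs simultaneously, while preserving 231-avoidance of the lows and the descending order of the highs, is a genuine combinatorial argument that is absent. Since the conclusion of the proposition is essentially equivalent to the existence of such a threaded extension, the missing inner induction is the theorem itself in disguise.

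It is worth noting how the paper's own proof sidesteps this difficulty: instead of splitting $\tau$ as $\alpha n\beta$ around the maximum, it decomposes $\tau$ along its left-to-right maxima $m_1,\dots,m_k$. The key observation is that no constraint of $P(\tau)$ ever has the form $t\prec m_i$, so \emph{all} the left-to-right maxima can be pulled to the very front, in decreasing order; the no-231-chain hypothesis (i.e.\ 2431-avoidance of $\tau$) then forces the remaining elements into layers of consecutive values occurring left to right, each of which is handled by induction, and the concatenation $m_k m_{k-1}\cdots m_1\lambda_1\lambda_2\cdots$ works. With that decomposition the ``high'' elements never have anything forced before them, so there is nothing to thread; your decomposition creates the interleaving problem that then cannot be waved away.
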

\begin{proof}
Again we will proceed inductively.
Let $\tau$ avoid 2431 (the condition that its poset has no 231-chain) and consider its left-to-right maxima. If $\tau$ has only one left-to-right maximum, i.e.~$\tau = n \tau'$ then with $\lambda'$ a 231-avoiding linear extension of $P(\tau')$, we have $n \lambda'$ as a 231-avoiding linear extension of $P(\tau)$.  So, assume henceforth that $\tau$ has at least two left-to-right maxima, and let the values of the left-to-right maxima be denoted $m_1, m_2, \dots, m_k$.

The non-left-to-right maxima fall into layers of values between successive maxima and these sets of values occur left to right in $\tau$ (from the 2431 condition).  The situation is illustrated in Figure \ref{fig-2431} where the grey boxes represent subsequences $\theta_1,\theta_2,\ldots\theta_k$ of $\tau$.  While the values in each $\theta_i$ are, by definition, contiguous they are not necessarily contiguous by position since they may be punctuated by left-to-right maxima lying above them.

\begin{figure}[h]
\begin{center}
\psset{xunit=0.05cm, yunit=0.05cm}
\psset{linewidth=0.002cm}
\newgray{gr}{0.9}
\begin{pspicture}(0,0)(100,65)
\psline[linecolor=black,linestyle=solid](10,10)(100,10)
\psline[linecolor=black,linestyle=solid](20,20)(100,20)
\psline[linecolor=black,linestyle=solid](30,30)(100,30)
\psline[linecolor=black,linestyle=solid](40,40)(100,40)
\psline[linecolor=black,linestyle=solid](50,50)(100,50)
\psline[linecolor=black,linestyle=solid](60,60)(100,60)
\pscircle*(10,10){0.05cm}
\pscircle*(20,20){0.05cm}
\pscircle*(30,30){0.05cm}
\pscircle*(40,40){0.05cm}
\pscircle*(50,50){0.05cm}
\pscircle*(60,60){0.05cm}
\pspolygon*[linearc=0.1,linecolor=gr](11,1)(24,1)(24,9)(11,9)
\pspolygon[linearc=0.1](11,1)(24,1)(24,9)(11,9)
\pspolygon*[linearc=0.1,linecolor=gr](25,11)(42,11)(42,19)(25,19)
\pspolygon[linearc=0.1](25,11)(42,11)(42,19)(25,19)
\pspolygon*[linearc=0.1,linecolor=gr](44,21)(54,21)(54,29)(44,29)
\pspolygon[linearc=0.1](44,21)(54,21)(54,29)(44,29)
\pspolygon*[linearc=0.1,linecolor=gr](55,31)(75,31)(75,39)(55,39)
\pspolygon[linearc=0.1](55,31)(75,31)(75,39)(55,39)
\pspolygon*[linearc=0.1,linecolor=gr](76,41)(84,41)(84,49)(76,49)
\pspolygon[linearc=0.1](76,41)(84,41)(84,49)(76,49)
\pspolygon*[linearc=0.1,linecolor=gr](85,51)(99,51)(99,59)(85,59)
\pspolygon[linearc=0.1](85,51)(99,51)(99,59)(85,59)
\end{pspicture}
\end{center}
\caption{The generic structure of a 2431-avoiding permutation}
\label{fig-2431}
\end{figure}
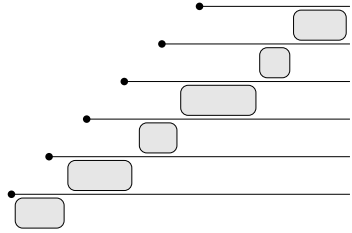

Put $\tau=\tau_1 \tau_2 \ldots \tau_k$ where each $\tau_i$ consists of $\theta_i$ together with its punctuating left-to-right maxima (if any); include in $\tau_i$ any left-to-right maxima that immediately precede $\theta_i$.  The constraints of $P(\tau)$ have two possible forms: they are constraints of $P(\tau_i)$ or of the type $t_i\prec t_j$ with  $t_i\in\tau_i$, $t_j\in\tau_j$ and $i<j$.  Notice that none of these constraints have the form $t\prec m_i$.

By induction we may find a 231-avoiding linear extension of each $P(\tau_i)$ (unless $\tau_1 = \tau$:-  in this case take a 231-avoiding linear extension of $\tau$ with $m_1$ deleted).  In each such linear extension let $\lambda_i$ denote the sequence of values belonging to $\theta_i$; this too avoids 231.

Consider $m_km_{k-1}\cdots m_1\lambda_1\lambda_2\cdots$.  This  is a linear extension of $P(\tau)$ that also avoids 231.
%
%
%
%
\end{proof}

\begin{corollary}
$\Av(231)\A=\Av(2431)$.
\end{corollary}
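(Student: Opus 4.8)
The plan is to deduce this corollary directly from the Proposition just proved, together with Lemma~\ref{chain-constraints} specialized to $\alpha = 231$. The Proposition supplies the non-obvious half of the characterization of $\Av(231)\A$, and the lemma translates a poset-chain condition into an ordinary pattern-avoidance condition.

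First I would record the equivalence that $\tau \in \Av(231)\A$ if and only if $P(\tau)$ has no $231$-chain. By Proposition~\ref{poset-condition}, $\tau \in \Av(231)\A$ exactly when some linear extension of $P(\tau)$ avoids $231$. If $P(\tau)$ contains a $231$-chain then, as noted before Theorem~\ref{chain3}, every linear extension contains $231$, so $\tau \notin \Av(231)\A$; the Proposition just established gives the converse, producing a $231$-avoiding linear extension whenever no $231$-chain is present.

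Next I would convert ``no $231$-chain'' into a single avoidance condition using Lemma~\ref{chain-constraints}. A $231$-chain is a chain $a_1 \prec a_2 \prec a_3$ with $a_1 a_2 a_3 \sim 231$, so $a_3 < a_1 < a_2$. Applying the lemma, the relation $a_1 \prec a_2$ has $a_1 < a_2$ and therefore demands an intermediate term $b_1 > a_2$ between $a_1$ and $a_2$ in $\tau$, whereas $a_2 \prec a_3$ has $a_2 > a_3$ and demands no intermediate term. Hence $P(\tau)$ has a $231$-chain precisely when $\tau$ contains a subsequence $a_1 b_1 a_2 a_3$ with $a_3 < a_1 < a_2 < b_1$, that is, a subsequence isomorphic to $2431$. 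Combining this with the previous paragraph yields $\Av(231)\A = \Av(2431)$, and since the forbidden set is the single permutation $2431$ the basis is $\{2431\}$.

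I do not expect any real obstacle: the corollary is an immediate consequence of the preceding Proposition and the bookkeeping of Lemma~\ref{chain-constraints}. The only point requiring a moment's care is checking that the four-term pattern forced by a length-three $231$-chain is exactly $2431$ (a single intermediate value sits above the ``$23$'' ascent and none is needed at the ``$31$'' descent), and that this pattern is genuinely excluded --- one verifies directly that $P(2431)$ contains the $231$-chain $2 \prec 3 \prec 1$, since $2 \prec 3$ and $3 \prec 1$ are among its constraints and $2\,3\,1 \sim 231$.
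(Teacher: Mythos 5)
Your proof is correct and follows exactly the route the paper intends: combine the Proposition (no $231$-chain implies a $231$-avoiding linear extension) with the necessity observation before Theorem~\ref{chain3}, then use Lemma~\ref{chain-constraints} to translate ``no $231$-chain'' into avoidance of the single pattern $2431$. Your computation of that pattern, and the check that $P(2431)$ itself contains the chain $2 \prec 3 \prec 1$, are both accurate, so nothing is missing.
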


\begin{proposition}
If $P(\tau)$ has no 213-chain then it has a 213-avoiding linear extension.
\end{proposition}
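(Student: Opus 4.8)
The plan is to mirror the inductive, left-to-right-maxima strategy used above for $231$, but with one crucial change in the placement of the maxima. First I would translate the hypothesis: by Lemma~\ref{chain-constraints} a $213$-chain in $P(\tau)$ is precisely a subsequence $a_1a_2b_2a_3$ of $\tau$ with $a_2<a_1<a_3<b_2$, that is, an occurrence of $2143$; so the assumption is simply that $\tau$ avoids $2143$, and I induct on $|\tau|$. Write $\tau=m_1\tau_1m_2\tau_2\cdots m_k\tau_k$ with $m_1<\cdots<m_k$ its left-to-right maxima and $\tau_i$ the block of non-maxima lying between $m_i$ and $m_{i+1}$ (so every entry of $\tau_i$ is smaller than $m_i$). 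A short computation with the recursive description of $P$ shows that the maxima form an antichain, that $m_i\prec x$ for every $x$ in every block $\tau_j$ with $j\ge i$, that $\tau_i\prec\tau_j$ entrywise whenever $i<j$, and that all remaining constraints are internal to the individual $P(\tau_i)$. The construction I would try is to list the maxima first in \emph{increasing} order (this is the key difference from the $231$ case, where they were decreasing), followed by the blocks in their forced order, each block replaced by a $213$-avoiding linear extension $\lambda_i$ of $P(\tau_i)$ supplied by induction: $\Lambda=m_1m_2\cdots m_k\,\lambda_1\lambda_2\cdots\lambda_k$.

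Next I would verify that $\Lambda$ avoids $213$. It is a legitimate linear extension since all the constraints $m_i\prec(\text{later blocks})$ and $\tau_i\prec\tau_j$ are respected and the maxima are mutually free. A maximum $m_i$ can never be the large final entry of a $213$: the only entries preceding it in $\Lambda$ are the smaller maxima $m_1<\cdots<m_{i-1}$, which are increasing and so contain no descent to supply the ``$2,1$''. Occurrences lying inside a single block are excluded by the inductive choice of $\lambda_i$. This leaves the cross-cutting occurrences: a $213$ split among different blocks, or with a maximum as the middle-valued first entry and two block entries as the ``$1$'' and ``$3$''. For each such hypothetical $uvw$ (values $v<u<w$) I would exhibit a $2143$ in $\tau$, contradicting the hypothesis. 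The mechanism is uniform: the entry $w$ lies in some block $\tau_b$, and its governing maximum $m_b$ precedes $w$ in $\tau$ and exceeds it, giving an upper inversion $(m_b,w)$; when the pair contributing ``$2,1$'' is genuinely inverted in $\tau$ it supplies a lower inversion whose top lies below $w$ and to the left of $m_b$, and stacking the two inversions yields $2143$.

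The hard part, and where I expect essentially all the work, is the within-block ordering, since the increasing placement of the maxima is not by itself enough to control the ``created'' inversions that the linear extension $\lambda_i$ introduces beyond those forced by $\tau$. The difficulty is that an incomparable pair $x<y$ inside a block can be pushed into a forbidden $213$ in two opposite ways: as a descent $y\cdots x$ feeding a larger entry $w>y$ in a later block (which demands $x$ before $y$), or as an ascent $x\cdots y$ straddled by an earlier entry or maximum $u$ with $x<u<y$ (which demands $y$ before $x$). No single ordering rule can meet both demands, so the real content is the claim that $2143$-avoidance of $\tau$ forbids \emph{both} dangers for the same pair at once: if some later entry exceeds $y$ while some earlier entry or maximum lies strictly between $x$ and $y$, then — again using that each $m_b$ dominates and just precedes its block — one extracts two stacked inversions and hence a $2143$. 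Thus every incomparable block pair has a well-defined safe orientation, and these orientations are mutually consistent. I would secure the existence of a block extension realizing all of them while still avoiding $213$ internally by strengthening the induction hypothesis to carry exactly this orientation property; checking that the strengthened hypothesis is inductively maintainable and suffices to kill all cross-cutting occurrences is the crux of the argument.
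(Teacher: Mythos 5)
Your setup is largely sound, and more of it is salvageable than you might think: by Lemma~\ref{chain-constraints} the hypothesis is indeed equivalent to $\tau$ avoiding $2143$; your description of $P(\tau)$ relative to the left-to-right maxima decomposition is correct (the maxima form an antichain, $m_i\prec x$ for all $x$ in blocks $\tau_j$ with $j\ge i$, cross-block constraints hold entrywise, and all other constraints are internal to the $P(\tau_i)$); and your key dichotomy claim is in fact true: if an incomparable pair $x<y$ in $\tau_i$ had both a later-block element $w\in\tau_j$ with $w>y$ and an earlier element $u$ (maximum or earlier-block entry) with $x<u<y$, then stacking the inversion sitting above $x$ (namely $(u,x)$, or $(m_h,x)$ where $\tau_h\ni x$) against the inversion $(m_j,w)$ produces a $2143$ in $\tau$.

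The genuine gap is that you never construct the block extensions $\lambda_i$. You have reduced the proposition to a strictly stronger statement --- ``each $P(\tau_i)$ admits a linear extension that avoids $213$ \emph{and} realizes every prescribed safe orientation'' --- and then you defer exactly this statement, conceding that formulating the strengthened induction hypothesis and checking that it is maintainable ``is the crux of the argument.'' That crux is the whole content of the proposition: the plain inductive hypothesis (the proposition itself) gives you no control over how incomparable pairs are oriented, and nothing in your write-up shows that the prescribed orientations are even simultaneously realizable (i.e.\ acyclic together with the constraints of $P(\tau_i)$), much less realizable by a $213$-avoiding extension. A proof cannot end where the acknowledged hard part begins. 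It is worth seeing how the paper sidesteps this trap entirely: it decomposes around the global maximum, $\tau=\alpha n\beta$, and partitions values into interleaved intervals $X_i$ (values of $\alpha$) and $Y_i$ (values of $\beta$), as in Proposition~\ref{prop132}. The only post-processing its induction needs is (i) sorting each poset interval $T_i$ of an inductively obtained $213$-avoiding extension of $P(\alpha)$ into increasing order, which cannot create a $213$ because the $T_i$ are intervals both of values and of the poset, and (ii) concatenating inductively obtained extensions of the $Y_i$ in reverse block order, which is legal because no constraint points upward from $Y_i$ to $Y_j$ with $i<j$. The final check on $n\lambda\mu$ is then a two-case argument. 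In other words, the paper's ``strengthening'' is an explicit, cheap rearrangement rather than an unproven orientation-realization lemma; if you wish to pursue your route, that lemma must be stated intrinsically (the dangers refer to elements outside $\tau_i$) and proved, and doing so does not appear easier than the paper's argument.
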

\begin{proof}

Put $\tau=\alpha n\beta$ where $n$ is the maximum value that occurs in $\tau$. As always we proceed by induction, and as always, the case where $\alpha$ is empty is trivial.

Similarly to the notation of Proposition \ref{prop132} we partition the set of values  occurring in $\alpha$ and $\beta$ into intervals
$X_0, Y_1,X_1,Y_1,\ldots,X_k,Y_k,X_{k+1}$ where each member of this list is a set of values smaller than its successor in the list and where the elements of $X_i$ are terms of $\alpha$ and the elements of $Y_i$ are terms of $\beta$.  These sets are non-empty with the possible exception of $X_0$ and $X_{k+1}$.

The set of values from $X_0 \cup X_1 \cup \cdots \cup X_{k}$ occur in increasing order in $\tau$ because if there were a decreasing pair $vu$ then, with $w \in Y_k$, $v \prec u \prec w$ would be a 213-chain in $P(\tau)$.  We write $\alpha=\phi_0 \theta_1 \phi_1 \theta_2 \cdots \theta_r \phi_r$ where $\theta_1\theta_2\cdots\theta_r$ is the increasing set of values from $X_0 \cup \cdots \cup X_{k}$ and $\phi_0 \phi_1\cdots \phi_r$ are separating sequences of values comprising the set $X_{k+1}$ (all non-empty except possibly for $\phi_0$ and $\phi_r$).
The constraints of $P(\tau)$ between the elements of $X_0 \cup X_1 \cup \cdots \cup X_{k}$ are easily seen to be only of the form $a\prec b$ where $a \in \theta_i$, $b \in \theta_j$ and $i<j$. There are no constraints between elements of any $\theta_i$.  Moreover, the set $T_i$ of elements of $\theta_i$ forms a poset interval in $P(\alpha)$.

Now, by induction, we can find some 213-avoiding linear extension of $P(\alpha)$.  Then we can arrange the values of each poset interval $T_i$ in increasing order without introducing a 213-subsequence obtaining another linear extension  $\lambda$.  Note that, in $\lambda$, all elements of $X_0 \cup X_1 \cup \cdots \cup X_{k}$ occur in increasing order.

Next observe that there is no constraint of $P(\tau)$ of the form $u \prec w$ with $u \in Y_i$, $w \in Y_j$ and $i<j$; for then, with $v \in X_i$, $v \prec u \prec w$ would be a 213-chain of $P(\tau)$.  Let $\beta_i$ be the sequence of values of $Y_i$ as they occur in $\beta$.  By induction we can find 213-avoiding linear extensions $\mu_i$ of each $P(\beta_i)$ and then $\mu_k \cdots \mu_2 \mu_1$ will be a 213-avoiding linear extension of $P(\beta)$.

Finally, consider the permutation $n \lambda \mu$.  This is certainly a linear extension of $P(\tau)$.  Furthermore it has no 213-subsequence.  Clearly there can be no 213-subsequence containing $n$, nor contained entirely within $\lambda$ or within $\mu$.  There cannot be a 213-subsequence $bac$ with both $b, a \in \lambda$ because $X_0 \cup X_1\cup \cdots \cup X_{k}$ occur in increasing order and so one of $b,a$ would have to lie in $X_{k+1}$ (and then there is no larger element in $\beta$).  Nor could we have $b\in \lambda$ and $a,c \in \mu$ because then $a,c$ would need to lie in some common $Y_i$ and $b$ could not separate them by value.
\end{proof}

\begin{corollary}
$\Av(213)\A=\Av(2143)$.
\end{corollary}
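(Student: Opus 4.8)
The plan is to read off the basis of $\Av(213)\A$ from Theorem~\ref{chain3} together with Lemma~\ref{chain-constraints}. The proposition just proved is precisely the $\alpha=213$ case of the sufficiency half of Theorem~\ref{chain3}: a $213$-avoiding linear extension $\sigma$ of $P(\tau)$ gives, via Proposition~\ref{poset-condition}, an allowable pair $(\sigma,\tau)$ with $\sigma\in\Av(213)$, so $\tau\in\Av(213)\A$; and the necessity half (a $213$-chain forces every linear extension to contain $213$) was already noted after Proposition~\ref{poset-condition}. Hence $\tau\in\Av(213)\A$ if and only if $P(\tau)$ contains no $213$-chain, and it remains only to express this as a single pattern-avoidance condition on $\tau$.

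To do so I would apply Lemma~\ref{chain-constraints} with $k=3$ to a chain $a_1\prec a_2\prec a_3$ of pattern $213$ (so $a_2<a_1<a_3$). The lemma yields a witnessing subsequence $a_1b_1a_2b_2a_3$ of $\tau$: since $a_1>a_2$ the entry $b_1$ is empty, while since $a_2<a_3$ the entry $b_2$ must exceed $a_3$. The witness is thus $a_1a_2b_2a_3$ with $a_2<a_1<a_3<b_2$, which is isomorphic to $2143$; conversely every occurrence of $2143$ furnishes such a chain. Therefore $P(\tau)$ has a $213$-chain exactly when $\tau$ contains $2143$, so being $213$-chain-free is equivalent to $\tau\in\Av(2143)$, giving $\Av(213)\A=\Av(2143)$.

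The argument involves no genuine obstacle, only careful bookkeeping of the value comparisons in Lemma~\ref{chain-constraints}. The point deserving attention is that the unique ascent $a_2\prec a_3$ of the chain forces the intermediate entry $b_2$ to lie above $a_3$---not merely above $a_2$---so that the four values assemble as $2143$ rather than some other length-$4$ pattern. This single-ascent feature is also why the basis here is one permutation of length $4$, in contrast to the $123$-chain computation at the end of Section~\ref{intro}, where two ascents produce length-$5$ basis elements.
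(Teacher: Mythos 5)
Your proposal is correct and follows essentially the same route as the paper: the paper derives each basis "as a corollary to the main result, using the observation at the end of the preceding section," i.e.\ exactly your combination of the $\alpha=213$ proposition (sufficiency), the $\alpha$-chain necessity remark, and Lemma~\ref{chain-constraints} with $k=3$ to translate ``no $213$-chain'' into avoidance of the single witness pattern $2143$. Your bookkeeping of the witness ($b_1$ empty at the descent, $b_2>a_3$ at the ascent, yielding $a_1a_2b_2a_3\sim 2143$) matches the computation the paper leaves implicit, having illustrated it only for $\alpha=123$.
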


\begin{proposition}
If $P(\tau)$ has no 123-chain then it has a 123-avoiding linear extension.
\end{proposition}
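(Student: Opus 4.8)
The plan is to follow the recursive template of the previous four propositions. Write $\tau=\alpha n\beta$ with $n$ the largest value and induct on the length of $\tau$. The case where $\alpha$ is empty is immediate: then $n$ followed by a $123$-avoiding linear extension of $P(\beta)$ works, since $n$ is both first and largest and so cannot lie in any increasing triple. This last observation is in fact the governing idea: $n$ may always be placed at the very front of the output, where it can never participate in a $123$, so it suffices to produce the rest of the word avoiding $123$. I therefore aim to output $\sigma=n\lambda\mu$, where $\lambda$ is a $123$-avoiding linear extension of $P(\alpha)$ and $\mu$ one of $P(\beta)$, chosen so that no $123$ straddles the join.

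First I would pin down where a straddling $123$ can arise. Write $E_\alpha$ for the set of values of $\alpha$ lying below $\max\beta$ and $E_\beta$ for the values of $\beta$ lying above $\min\alpha$; thus $E_\alpha$ consists of small $\alpha$-values and $E_\beta$ of large $\beta$-values. Since all of $\alpha$ precedes all of $\beta$ in every linear extension, and a $123$ is value-increasing along its positions, a straddling occurrence is either an increasing pair of $\lambda$ topped by a $\beta$-value, or a single $\lambda$-value beneath an increasing pair of $\mu$. The first forces its upper $\alpha$-value (hence both) into $E_\alpha$; the second forces both $\mu$-values into $E_\beta$. Consequently both are killed \emph{exactly} when the values of $E_\alpha$ occur decreasingly in $\lambda$ and those of $E_\beta$ decreasingly in $\mu$. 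That these demands are compatible with the order is the content of two facts, each proved by exhibiting a forbidden chain: there is no relation $d\prec d'$ with $d<d'$ and $d,d'\in E_\alpha$ (for, taking $b\in\beta$ with $b>d'$, the intervening $n$ yields $d'\prec b$, whence $d\prec d'\prec b$ is a $123$-chain), and symmetrically none within $E_\beta$. Hence the sub-poset induced on $E_\alpha$, and likewise on $E_\beta$, is a suborder of the decreasing-by-value order, so sorting these sets downward is consistent with $P(\alpha)$ and $P(\beta)$ respectively.

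Granting such $\lambda$ and $\mu$, the check that $\sigma=n\lambda\mu$ avoids $123$ is routine: an occurrence lies within $\lambda$ or within $\mu$ (excluded by their choice), uses $n$ (impossible), or straddles, in which case it would require two values drawn from the decreasing set $E_\alpha$ or from the decreasing set $E_\beta$, a contradiction. The genuine difficulty, and what I expect to be the main obstacle, is the manufacture of $\lambda$: a $123$-avoiding linear extension of $P(\alpha)$ in which $E_\alpha$ is decreasing. Merely sorting $E_\alpha$ downward and appending it need not be legitimate, because $P(\alpha)$ may contain forced relations $d\prec s$ with $d\in E_\alpha$ and $s$ a large value, pinning $d$ before $s$; were such an $s$ then followed by a still larger value, a $123$ would reappear. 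The key lever is that the no-$123$-chain hypothesis controls precisely these relations: whenever $d\prec s$ with $d\in E_\alpha$ and $d<s$, there can be no $t>s$ with $s\prec t$, since $d\prec s\prec t$ would be a $123$-chain; so every such $s$ is upward-maximal. I would therefore build $\lambda$ by first extending the large values $S_\alpha=\alpha\setminus E_\alpha$ inductively (the induced order on $S_\alpha$ being exactly $P(\alpha|_{S_\alpha})$, because every intermediate value relevant to a relation between elements of $S_\alpha$ exceeds $\max\beta$ and so itself lies in $S_\alpha$), and then inserting each value of $E_\alpha$ as late as its forced relations permit, using upward-maximality to ensure the insertion point is never in front of an increasing pair of large values.

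The remaining piece, the construction of $\mu$ with $E_\beta$ decreasing, is the mirror image of this argument applied to $\beta$. Assembling $n\lambda\mu$ then yields the proposition, and the observation at the end of Section~\ref{one-length-3} on the three minimal obstructions $13254$, $14253$, $15243$ identifies the basis, giving the attendant corollary $\Av(123)\A=\Av(13254,14253,15243)$.
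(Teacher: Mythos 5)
Your reduction is sound, and indeed elegant: writing $\tau=\alpha n\beta$, every linear extension puts all of $\alpha$ before all of $\beta$, prepending $n$ is harmless, a straddling $123$ exists precisely when $E_\alpha$ fails to be decreasing in $\lambda$ or $E_\beta$ fails to be decreasing in $\mu$, and your verification that neither $E_\alpha$ nor $E_\beta$ carries an upward poset relation is correct. The gap is in the manufacture of $\lambda$ --- exactly where you predicted the difficulty --- and the mechanism you propose does not close it. Take $\tau=1\,6\,4\,5\,7\,2\,3$, so $\alpha=1645$, $n=7$, $\beta=23$, $E_\alpha=\{1\}$, $S_\alpha=\{6,4,5\}$. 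Here $P(\tau)$ has no $123$-chain (its relations are $6\prec4$, $6\prec5$, $1\prec4$, $1\prec5$, and everything $\prec 2,3$), the induced poset on $S_\alpha$ has only $6\prec4$ and $6\prec5$ with $4,5$ incomparable, and both $4$ and $5$ are upward-maximal. Induction may perfectly well hand you the $123$-avoiding linear extension $\lambda_S=645$. The forced relations of $1$ are $1\prec4$ and $1\prec5$ (via $164\sim132$ and $165\sim132$), so inserting $1$ ``as late as its forced relations permit'' places it immediately after $6$, i.e.\ directly in front of the increasing pair $4\,5$, and $\lambda=6145$ contains $145\sim123$. Upward-maximality is a property of the poset; it says nothing about how a linear extension orders \emph{incomparable} elements such as $4$ and $5$, and that is precisely what goes wrong. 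The construction succeeds only if induction happens to return $654$ rather than $645$, and nothing in your argument forces that choice; so the key claim ``upward-maximality ensures the insertion point is never in front of an increasing pair of large values'' is false, and with it the proof.

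What is missing is global control over the choice of $\lambda$, and supplying it is where the real content of the proposition lies. The paper gets it by an extremal exchange argument, quite different from your decomposition: among all extensions $n\lambda\mu$ with $\lambda,\mu$ inductively $123$-avoiding, take one with the maximum number of inversions, and show that any straddling $123$ permits swapping two adjacent entries of $\lambda$ (or $\mu$) to produce an extension of the same type with one more inversion --- the no-$123$-chain hypothesis is used to show the relevant pairs are unrelated in $P(\tau)$. Alternatively, your own reduction can be completed without any insertion procedure: apply the induction hypothesis not to $\alpha$ but to the subsequence $\alpha\,n\,(\max\beta)$ of $\tau$; a $123$-avoiding linear extension of its poset, with $n$ moved to the front, is exactly $n\lambda(\max\beta)$ where $\lambda$ is a $123$-avoiding linear extension of $P(\alpha)$ in which $E_\alpha$ is decreasing, which is what you need, and symmetrically $(\min\alpha)\,n\,\beta$ yields $\mu$ (the degenerate cases $|\beta|=1$ and $|\alpha|=1$ need separate, easy, handling). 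Either route fills the hole; as written, your argument does not.
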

\begin{proof}
Again we let $\tau=\alpha n\beta$ where $n$ is the maximum element occuring in $\tau$, and again we may as well assume that $\alpha$ is non-empty.  By induction we can find 123-avoiding linear extensions of $P(\alpha)$ and $P(\beta)$ and so we can find linear extensions of $P(\tau)$ which consist of $n$ followed by 123-avoiding linear extensions of the entries of $\alpha$ and the entries of $\beta$.  We will argue that for any linear extension, $\epsilon = n \lambda \mu$, of this type containing a 123-subsequence there is another linear extension of the same type that has more inversions than $\epsilon$ does. 

Suppose that $\nu$ has a 123 pattern $xyz$. Either $x,y \in \lambda$ and $z \in \mu$ or $x \in \lambda$ and $y,z \in \mu$.  Suppose that the former occurs and, for the given $z$, take $x,y$ as close in position as possible.

We know that $x \not \prec  y$ (for certainly $y \prec  z$ and there is no 123-chain in $P(\tau)$).  If $x$ and $y$ are adjacent in $\lambda$  then because $x \not \prec  y$ we can exchange $x$ and $y$ and get a new linear extension of the same type with one more inversion.

So suppose that there is some intervening element $w$ occurring immediately after $x$ in $\lambda$ where, by choice of $x,y$, we shall have $w>y$.  If $x \not \prec  w$ we may exchange $x$ and $w$ and get a new linear extension of the same type with one more inversion.

On the other hand, it is not possible that $x \prec  w$. If it were then we would have, in $\alpha$, a subsequence $xtw$ for some $t>w$.  Now, certainly $y$ succeeds $x$ in $\alpha$ (or we would have $y \prec  x$ and $y,x$ would occur in this order in $\lambda$) but $y$ cannot occur after $t$ in $\alpha$ (that would mean, because of the sequence $xty$, that $x \prec y$) and it cannot occur between $x$ and $t$ in $\alpha$ for then the sequence $ytw$ in $\alpha$ would mean that $y\prec w$  contradicting that $w$ precedes $y$ in $\lambda$.

Exactly the same argument applies to $xyz$ sequences with $x\in\lambda$ and $y,z\in\mu$. So, if we take $\epsilon = n \lambda \mu$ to be that linear extension of $P(\tau)$ having the maximum possible number of inversions, it must avoid 123.

\end{proof}

\begin{corollary}
$\Av(123)\A=\Av(13254, 14253, 15243)$.
\end{corollary}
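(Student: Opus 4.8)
The plan is to obtain the corollary immediately from the Proposition just proved together with the translation machinery of Lemma \ref{chain-constraints}; there is essentially no new computation, only the assembling of pieces. First I would record the clean restatement of membership in $\Av(123)\A$, then convert the chain condition into avoidance conditions, and finally read off the three basis elements.

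First I would establish the characterization that $\tau \in \Av(123)\A$ if and only if $P(\tau)$ contains no 123-chain. By Proposition \ref{poset-condition}, membership $\tau \in \Av(123)\A$ is equivalent to $P(\tau)$ admitting a 123-avoiding linear extension. The preceding Proposition supplies one direction: if $P(\tau)$ has no 123-chain then such an extension exists. For the converse, if $P(\tau)$ does contain a 123-chain $a_1 \prec a_2 \prec a_3$ (so $a_1 < a_2 < a_3$), then every linear extension lists these three values in increasing order and hence contains the pattern $123$, so no 123-avoiding extension is possible. Combining the two directions yields the equivalence, which is exactly Theorem \ref{chain3} specialized to $\alpha = 123$.

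Next I would convert the chain condition into a pattern-avoidance condition via Lemma \ref{chain-constraints}. For a 123-chain $a_1 \prec a_2 \prec a_3$ both consecutive steps are ascending, so the lemma says such a chain occurs precisely when $\tau$ has a subsequence $a_1 b_1 a_2 b_2 a_3$ with $b_1 > a_2$ and $b_2 > a_3$. Consequently $P(\tau)$ has a 123-chain if and only if $\tau$ contains one of the minimal permutations of this shape, and $P(\tau)$ is 123-chain-free if and only if $\tau$ avoids all of them.

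Finally I would identify those minimal patterns. Writing the witnessing subsequence as $axbyc$ with $a<b<c$, $x>b$, $y>c$, the induced constraints force $a$ to be the least of the five values and $b$ the second least, while $c<y$ and $x$ occupies one of the three remaining values; the three placements of $x$ produce exactly $13254$, $14253$, $15243$ — precisely the enumeration already carried out in the remark following Lemma \ref{chain-constraints}. These three permutations all have length $5$ and are pairwise incomparable in the pattern order, so they constitute the basis and $\Av(123)\A = \Av(13254, 14253, 15243)$. The only point deserving a moment's care is verifying that no shorter pattern can witness a 123-chain; this is immediate, since two ascending steps require two distinct separators $b_1 \ne b_2$, forcing the witnessing subsequence to have length $5$. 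Everything else is bookkeeping, so I do not anticipate any genuine obstacle here.
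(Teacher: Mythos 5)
Your proof is correct and takes essentially the same route as the paper: the corollary there is obtained exactly by combining the proposition (no 123-chain implies a 123-avoiding linear extension exists), the necessity of the no-chain condition via Proposition~\ref{poset-condition}, and Lemma~\ref{chain-constraints} together with the enumeration $13254, 14253, 15243$ already carried out in the remark following that lemma. You have simply made explicit the assembly of pieces that the paper leaves implicit, and your verification of the enumeration and of the length-5 minimality is accurate.
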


%
%
%

\section{$\Av(\alpha,\beta)\A$ when $|\alpha|=|\beta|=3$}
\label{two-length-3}

There are 15 pairs $(\alpha,\beta)$ of permutations of length 3.  The structure of each of the pattern classes they define is well-known but we do not know of any convenient reference to that structure.  So, for completeness, we list in Table \ref{table-two-three} descriptions of these 15 pattern classes $\C$ leaving the elementary justifications to the reader. The ordering of the rows in that table groups similarly structured classes together.  The notation used is fairly standard.  $\I$ and $\D$ denote the classes of increasing and decreasing permutations respectively.  The sum of two permutations $\sigma\oplus\tau$ is that permutation $s_1\ldots s_pt_1\ldots t_q$ where $s_1\ldots s_p\sim \sigma$, $t_1\ldots t_q\sim\tau$ and $s_i<t_j$ for all $i,j$.  Skew sums, $\sigma \ominus \tau$, are defined similarly except the final property is $s_i>t_j$ for all $i,j$.  Notation such as $213[\I,\I,\I]$ denotes the class of all inflations of 213 in which each term is replaced by an increasing sequence of consecutive values.

\newcommand{\pb}[1]{\parbox[t]{4cm}{\raggedright #1 \strut}}

\begin{table}
\begin{tabular}{ccll}
\hline
Index & Basis of $\C$ & Structure of $\C$ & Basis of $\C\A$\\
\hline
1&132, 321&$213[\I,\I,\I]$&321, 2143, 2413\\
2&213, 321&$132[\I,\I,\I]$&321, 2143, 2413\\
3&231, 312&\pb{Sums of decreasing permutations}&2413, 2431, 3142, 4132\\
4&231, 321&\pb{Sums of $t\ 1\ 2\ \cdots t-1$}&231,321\\
5&312, 321&\pb{Sums of $2\ 3\ \cdots t\ 1$}&312, 321\\
6&123, 231&$312[\D,\D,\D]$&\pb{2431, 13254, 13524, 14253, 15243, 31524, 461325}\\
7&123, 312&$231[\D,\D,\D]$&\pb{3142, 4132, 13254,13524,13542}\\
8&132, 213&\pb{Skew sums of increasing permutations}&\pb{1432, 2143,13524}\\
9&123, 132&\pb{Skew sums of $t-1\ t-2\ \cdots1\ t$}&\pb{1423, 1432, 13254}\\
10&123, 213&\pb{Skew sums of $1\ t\ t-1\ \cdots 2$}&\pb{1243, 2143}\\
11&132, 231&\pb{Permutations whose diagram is shaped like $\vee$}&\pb{1432, 2431}\\
12&132, 312&\pb{Permutations whose diagram is shaped like $<$}&\pb{132}\\
13&213, 231&\pb{Permutations whose diagram is shaped like $>$}&\pb{2143, 2413, 2431}\\
14&213, 312&\pb{Permutations whose diagram is shaped like $\wedge$}&\pb{2143, 3142, 4132}\\
15&123, 321&Finite&\pb{321,1423,
    2314,
    2341,
    4123,
    12345,
    12354,
    12435,
    12453,13245,
    13254,
    21345,
    21354,
    21435,
    21453,
    31245,
    3125}\\
\hline
\end{tabular}
\caption{The structure of classes $\C$ having two basis elements of length 3, and the corresponding bases of the classes $\C\A$.}
\label{table-two-three}
\end{table}

We now justify the final column of Table \ref{table-two-three} treating each case in turn.  In all cases it is routine to verify that none of the claimed basis permutations $\beta$ lie in $\C\A$. We simply have to verify that,  if $\beta$ is a proposed basis element, then all linear extensions of $P(\beta)$ involve one of the basis permutations of $\C$. The proposed bases were in fact produced by computer search precisely with respect to that property.  Thus we shall only be concerned with proving that a permutation $\tau$ that avoids these basis permutations is an image of some permutation in $\C$.

\case{1}{$\Av(132, 321)\A$}

Permutations $\tau\in\Av(321, 2143, 2413)$ are merges of two increasing sequences $L$ and $U$ (with $L<U$) (this is a symmetry of a result in \cite{atkinson:restricted-perm:}).  It is readily checked that $P(\tau)$ has only constraints $u\prec\ell$ with $u\in U$, $\ell\in L$ and constraints $\ell_1\prec\ell_2$ with $\ell_1,\ell_2\in L$ and $\ell_1<\ell_2$.  Therefore $P(\tau)$ has a linear extension in which the terms of $L$ and the terms of $U$ occur in the same order as they do in $\tau$, and all the terms of $L$ precede all the terms of $U$;  such permutations lie in  $\Av(132, 321)$ (indeed they lie in $\Av(132,213,321)$).

\case{2}{$\Av(213, 321)\A$}

The proof is exactly like the previous one.

\case{3}{$\Av(231, 312)\A$}

When a permutation that is a sum of decreasing permutations is processed by a priority queue the possible outputs are precisely sums of permutations in $\Av(132)$ (see Proposition \ref{decreasing-image}).  But, from a (symmetry of a) result of \cite{atkinson:restricted-perm:wreath}, the class of all such permutations is $\Av(2413,2431,3142,4132)$.

\case{4 and case 5}{$\Av(231, 321)\A$ and $\Av(312,321)\A$}

These follow from Theorem \ref{invariant}.

%

\case{6}{$\Av(123,231)\A$}

Let $\tau\in\Av(2431, 13254, 13524, 14253, 15243, 31524, 461325)$.  We divide the elements of $\tau$ into three sets:

\begin{itemize}
\item $A$ is the set of  elements that play the role of `1' in a 132-subsequence of $\tau$ together with all  elements less than any of these,
\item $B$ is the set of  elements that play the role of `2' in a 132-subsequence of $\tau$,
\item $C$ is the set of any remaining elements.
\end{itemize}

The goal is to show that $\gamma\alpha\beta$ is a linear extension of $P(\tau)$ where $\alpha$, $\beta$, and $\gamma$ are decreasing permutations whose elements are equal to the elements in $A$, $B$ and $C$ respectively, and that $\gamma \alpha \beta \in 312[\D, \D, \D]$.

First we will verify that $A\cup B\cup C$ is a partition of the terms of $\tau$.  It suffices to prove that $A\cap B$ is empty. 
It is easy to check that no element can both be a `1' in a 132 and a `2' in a 132 (take the copy in which it is a `2', append the 32 of the copy in which it is a `1', and one of the  basis elements 13254, 14253, or 15243 results). The remaining case is that of an element $a$, which is a `2' in a 132 that lies below some `1' of a 132. Call that 132 $bdc$. Then, since $a$ is not a `1', and 2431 is a basis element, we must have $b,d,a,c$ occurring in this order. Now let $xya$ be a 132. In all cases the set of elements $\{b,d,c,x,y,a\}$ contains a subsequence matching one of the basis elements.

Next we prove that, for all $a\in A, b\in B, c\in C$, we have $a<b<c$.  To do this we merely have to prove that if $b \in B$, $b' < b$, $b' \not\in A$, then $b' \in B$.
So let $xyb \sim 132$. Since $b' \not \in A$, $b' > x$ and $b'$ follows $y$ in $\tau$ (else $b'yb \sim 132$). But then $xyb' \sim 132$, so $b' \in B$.

Now define $\lambda=\gamma\alpha\beta$ where $\alpha,\beta, \gamma$ are the terms of $A,B,C$ respectively each written in decreasing order.  So $\lambda\in\Av(123,231)$.  We shall verify that $\lambda$ is a linear extension of $P(\tau)$.  

To begin with notice that, in $\tau$, all elements of $A$ precede all elements of $B$.  For suppose $b\in B$ because of some subsequence $xyb\sim 132$ of $\tau$.  A following element of $A$ could not be smaller than $x$ (else we obtain a subsequence $2431$) nor larger than $b$ (else $b \in A \cap B$), nor  between $x$ and $b$ (else it belongs to $B$). So, no such element can exist.  In particular there are  no constraints $a\prec b$ with $a\in A, b\in B$ and so having $\beta$ following $\alpha$ violates no constraints.

Rather more easily there are no constraints $a\prec c$ or $b\prec c$ for any $a\in A, b\in B, c\in C$ for in both cases this could only happen if $c$ was a `2' in a 132-subsequence.

Finally no constraints between elements of $A$, or between elements of $B$, or between elements of $C$ are violated.  Again for $A$ and $C$ this is clear: if $a_1\prec a_2$ with $a_1<a_2$ then $a_2$ would be a `2' in some 132-subsequence $a_1xa_2$.  But also $B$ has no $b_1\prec b_2$ constraint with $b_1<b_2$.  For if there was a subsequence $b_1xb_2\sim 132$ then $b_1\in A$ which is a contradiction.

\case{7}{$\Av(123,312)\A$}


Let $\tau \in \Av(3142, 4132, 13254, 13524, 13542)$ be given. Let $T$ be the set of elements that play the role of a `1' in a 132-subsequence of $\tau$ and let $S$ be the set of remaining elements of $\tau$.  Clearly $\tau |_S$ has no 132-subsequence.  But $\tau |_T$ has no 132-subsequence either for if $t_1t_3t_2$ was such a sequence we can find $v,u$ with $t_3vu\sim 132$.  Then we have one of $t_1t_3vut_2$, $t_1t_3vt_2u$, $t_1t_3t_2vu$ which have the patterns 13542, 13542, 13254 all of which are forbidden.

Furthermore the set of values of $T$ is a consecutive set.  To see this let $t,t'\in T$ with $t<t'$.  Consider some $x$ with $t<x<t'$.  Suppose first that $t$ precedes $t'$ in $\tau$ and let $t'vu\sim 132$.  If $x$ precedes $v$ then $xvu\sim 132$ and so $x\in T$.  Otherwise $\tau$ has a subsequence $tt'vxu\sim 13524$ or $tt'vux\sim 13542$ both of which are forbidden.  Now suppose $t'$ precedes $t$ in $\tau$.  There is some $v,u$ with $tvu\sim 132$.  Then $t'>u$ (else $tt'vu\sim 3142$ or $tt'vu\sim 4132$).  If $x$ precedes $v$ then, because of $xvu$, $x\in T$.  But otherwise the sequence $t'tvx\sim 3142$ which is forbidden.

The poset $P(\tau)$ has no constraints $s\prec t$ with $s\in S$ and $t\in T$.  Such constraints could only arise if either
\begin{itemize}
\item $s>t$ and $s$ precedes $t$ in $\tau$.  But then there is some $tvu\sim 132$ and then either $stvu\sim 4132$ or $stvu\sim 3142$ or $s$ is smaller than both $v,u$ and hence $s\not\in S$.
\item $s<t$ and there is some $swt$ with $w>t$.  But then, again, $s\not\in S$.
\end{itemize}

Now we construct a linear extension of $P(\tau)$ in $\Av(123, 312)$.  We take the set of values of $S$ and $T$ in decreasing order and place the values of $T$ before the values of $S$.  No poset constraints are violated because there can be none among the elements of $S$ of the form $s_1\prec s_2$ with $s_1<s_2$ for they could only arise from some $y$ with $s_1ys_2\sim 132$ which would mean $s_1\not\in S$, or among the elements of $T$ of the form $t_1\prec t_2$ with $t_1<t_2$ since we know $T$ has no 132-sequence, or across $S$ and $T$ of the form $s\prec t$ as proved above.

\case{8}{$\Av(132,213)\A$}

It is clear that every priority queue computation on a permutation which is a skew sum of $k$ increasing subsequences must map it to a permutation where the increasing subsequences are merged together so that we have $k$ `bands' (sets of consecutive values) of increasing elements. Moreover, no elements taken from 3 distinct bands can form a 132 pattern.  Conversely, if $\tau$ is a permutation of the latter type, it is easy to see that $P(\tau)$ can have no constraints $x\prec y$ where $x<y$ and $x$ and $y$ are in different bands. For then there would be some subsequence $xzy\sim 132$;  but $z$ would lie in a higher band than $y$ and so we would have a 132 pattern between three elements of distinct bands.  Thus $\Av(132,213)\A$ is exactly the class of permutations which can be divided into increasing bands with no 132 pattern between three distinct bands.  By inspection, none of 1432,2143,13524 have this property.  On the other hand let $\pi\in\Av(1432,2143, 13524)$ and consider a partition of the terms of $\pi$ into the smallest possible number of increasing bands.  Suppose there is a subsequence $xzy\sim 132$ with $x,y,z$ in distinct bands.  Then, because of the restrictions imposed by the basis elements, it is easy to check that the set of elements in all the bands between (and including) those containing $x$ and $y$ form an increasing set contradicting the minimality assumption.  Therefore $\Av(132,213)\A=\Av(1432,2143,13524)$.

\case{9}{$\Av(123,132)\A$}

Let $\tau \in \Av(1423, 1432, 13254)$ be given. 
Let $R$ be the set of elements that play the role of a `2' in a 132-subsequence of $\tau$, and let $S$ be the set of remaining elements of $\tau$. Clearly $\tau |_S$ has no 132-subsequence, but also $\tau |_R$ avoids 132.
For suppose that $\tau |_R$ contained a subsequence $ftm\sim132$.  Since $t \in R$  there is a subsequence $axt \sim 132$. Then $x$ must precede $f$ else $fxtm\sim 1432$.  Also, since $f\in R$, there is a subsequence  $bpf\sim 132$. Then $p < t$ else $bftm\sim 1432$, and $p > m$ else $bpftm\sim13254$. But, if $p$ precedes $x$ then $bpxf\sim 1432$, and if $p$ follows $x$ then $bxpt\sim 1423$. So, we have a contradiction in any case, and thus $\tau |_R$ must avoid 132.

For each $r\in R$ let $A_r = \{ a \mid  ayr \sim 132 \mbox{ for some  subsequence }ayr\mbox{ of }\tau \}$.
Then we have

\begin{itemize}
\item
$A_r \subseteq S$, and 
\item for all $b$ if $a < b < r$ and $a \in A_r$ then $b \in A_r$.
\end{itemize}

For the first part, suppose that $xza, ayr \sim 132$. If $z < r$ then $xzays \sim 13254$, if $r < z < y$ then $xzas \sim 1423$ and if $y < z$ then $xzyr \sim 1432$ yielding a contradiction in each case.

For the second part let $ayr \sim 132$, and $a < b < r$. If $b$ follows $r$ then $ayrb \sim 1432$, and if $b $ follows $y$ but precedes $r$ then $aybr \sim 1423$. So, $b$ precedes $y$, hence $byr \sim 132$ and $b \in A_r$.

It follows that if $a \in A_r$, and $a \in A_u$ then $r = u$ (otherwise we would have $r \in A_u$ or vice versa from the second part, contradicting the first part).

Construct $\gamma \in \Av(123,132)$ as follows: begin with the elements of $S$ in descending order and, for each $r \in R$ place it immediately following the smallest element of $A_r$.

To complete the proof we must show that $\gamma$ is a linear extension of $P(\tau)$. In other words we must show that for, every $x\prec y$ in $P(\tau)$, $x$ precedes $y$ in $\gamma$. Now $x\prec y$ arises either because $xy$ is an inversion of $\tau$ or because $xzy\sim 132$ is a subsequence of $\tau$.  In the latter case we have $x\in A_y$ and, as $y$ follows the elements of $A_y$ in $\gamma$, $y$ follows $x$ as required. The former case likewise holds easily if $x$ and $y$ both belong to $R$ or both belong to $S$. Suppose that $y \in R$ and $x \in S$. Let $a$ be the least element of $A_y$. Then $a < y < x$, so $x$ precedes $a$ and hence $y$ in $\gamma$. Finally, suppose that $y \in S$ and $x \in R$. Since $x > y$, and $x$ precedes $y$ in $\tau$, $y \not \in A_x$, and in fact $y < a$ for all $a \in A_x$. By construction, $y$ follows all such $a$ in $\gamma$, and hence also follows $x$.


\case{10}{$\Av(123,213)\A$}

Let $\tau \in \Av(1243,2143)$ be given. 
Let $A$ be the set of elements that play the role of a `1' in a 132-subsequence of $\tau$, and let $B$ be the set of remaining elements of $\tau$. Clearly $\tau |_B$ has no 132-subsequence, and by 1243-avoidance, $\tau |_A$ is decreasing.

For each $a\in A$ let $B_a = \{ b \mid ayb \sim 132\mbox { for some }y \}$.  Observe that

\begin{itemize}
\item
$B_a \subseteq B$ since $\tau |_A$ is decreasing, and
\item For all $b'$ if $a < b' < b$ and $b \in B_a$ then $b' \in B_a$. To prove this choose $y$ so that $ayb \sim 132$. We see that $b'$ cannot precede $a$ (or $b' a y b \sim 2143$), nor is positioned between $a$ and $y$ (or $a b' y b \sim 1243$), so $b'$ follows $y$ and thus $b' \in B_a$.
\end{itemize}

It follows that if $b \in B_a$, and $b \in B_{a'}$ then $a = a'$ (otherwise, from the second part, we would have one of $a' \in B_a$ or $a\in B_{a'}$ which would contradict the first part).

Construct $\gamma \in \Av(123,213)$ as follows: begin with the elements of $B$ in descending order and, for each $a \in A$, place $a$ immediately preceding the largest element of $B_a$.

To complete the proof we must show that $\gamma$ is a linear extension of $P(\tau)$. As in the previous case we must show that, for every $x\prec y$ in $P(\tau)$, $x$ precedes $y$ in $\gamma$.  Again $x\prec y$ arises either because  $xy$ is an inversion in $\tau$ or because $xzy\sim 132$ is a subsequence of $\tau$.  In the latter case $y\in B_x$ and, by construction, $x$ precedes all elements of $B_x$ in $\gamma$. The former likewise holds easily if $x$ and $y$ both belong to $A$ or both belong to $B$. Suppose that $x\in A$ and $y \in B$. Since $x>y$, $x$ precedes $y$ in $\gamma$ as required.  On the other hand suppose that $y \in A$ and $x \in B$. Then, since $x>y$ but $x$ precedes $y$ in $\tau$, $x \not \in B_y$. Therefore $x$ is greater than the largest element of $B_y$ and precedes $y$ in $\gamma$. 


\case{11}{$\Av(132,231)\A$}

Let $\tau\in\Av(1432,2431)$ be given.  Let $T$ be the set of elements that play the role of `2' in a 132-subsequence of $\tau$
and let $S$ be the remaining elements of $\tau$.  Then observe
\begin{itemize}
\item
There are no constraints $t\prec s$ with $t\in T$ and $s\in S$.  Such a constraint can arise either because $t>s$ and $t$ precedes $s$ in $\tau$ (and then, as $t\in T$, we can find a subsequence $xyt\sim 132$ in which case $xyts\sim 1432$ or $xyts\sim 2431$ which are forbidden) or because there is a subsequence $tvs\sim 132$ of $\tau$ contradicting that $s\not\in T$.
\item
There are no constraints $s_1\prec s_2$ with $s_1<s_2$ between elements of $S$.  This could only occur if there was a subsequence $s_1vs_2\sim 132$ of $\tau$ contradicting that $s_2\not\in T$.
\item
There are no constraints $t_1\prec t_2$ with $t_1>t_2$ between elements of $T$.  If so, $t_1\in T$ implies that there is a subsequence $abt_1\sim 132$ and then the subsequence $abt_1t_2$ is either isomorphic to 1432 or 2431.
\end{itemize}

Now define $\sigma$ to consist of the elements of $S$ in decreasing order followed by the elements of $T$ in increasing order.  Since no constraints are violated this permutation is a linear extension of $P(\tau)$ and, by construction, it lies in $\Av(132,231)$.

\case{12}{$\Av(132,312)\A$}

Any permutation in $\Av(132)$ is, by  Proposition \ref{decreasing-image}, the image of some decreasing permutation, and decreasing permutations are members of $\Av(132, 312)$.

\case{13}{$\Av(213,231)\A$}

Permutations of $\Av(2143, 2413, 2431)$ are easily seen to be  a merge of two sequences $\lambda$ and $\mu$ where $\lambda$ avoids 132, $\mu$ is increasing, and all terms of $\mu$ are less than all terms of $\lambda$.  A pre-image in $\Av(213,231)$ is found as follows.  Consider a priority queue algorithm that generates $\lambda$ from a decreasing sequence: a series of input and delete operations.  We insert into this operation sequence insert-delete pairs that generate the interpolated elements $\mu$ as and when they are needed.  Because $\mu<\lambda$ a newly inserted $m\in\mu$ will be the minimal entry of the priority queue and will then be immediately deleted. 

%
%


\case{14}{$\Av(213, 312)\A$}

Permutations of $\Av(2143, 3142, 4132)$ are easily seen to have the form $\lambda\mu$ where $\lambda$ is increasing and $\mu$ avoids 132.  Hence we can find a pre-image of the form $\lambda\mu'$ where $\mu'$ is a decreasing pre-image of $\mu$.

\case{15}{$\Av(123,321)\A$}

This is a routine check of finitely many permutations.

For three or more basis elements we merely state the results for precisely three basis elements and only for the infinite  classes $\Av(\alpha,\beta,\gamma)$.  The proofs are routine.

\centerline{
\begin{tabular}{lll}
\hline
\multicolumn{1}{c}{Basis of $\C$}&\rule{10pt}{0pt}&\multicolumn{1}{c}{Basis of $\C\A$}\\
\hline
123, 132, 213&&1243, 1423, 1432, 2143\\
123, 132, 231&&1423, 1432, 2431, 13254, 461325\\
123, 132, 312&&132\\
123, 213, 231&&1243, 2143, 2413, 2431\\
123, 213, 312&&1243, 2143, 3142, 4132\\
123, 231, 312&&2413, 2431, 3142, 4132, 13254\\
132, 213, 231&&1432, 2143, 2413, 2431\\
132, 213, 312&&132\\
132, 213, 321&&321, 2143, 2413\\
132, 231, 312&&132\\
132, 231, 321&&231, 321, 2143\\
132, 312, 321&&132, 312, 321\\
213, 231, 312&&2143, 2413, 2431, 3142, 4132\\
213, 231, 321&&213, 231, 321\\
213, 312, 321&&312, 321, 2143\\
231, 312, 321&&231, 312, 321\\
\end{tabular}
}

\section{An infinitely based $\Av(\alpha)\A$}
\label{infinite-section}

This section is devoted to proving

\begin{theorem}
$\Av(2431)\A$ is not finitely based.
\end{theorem}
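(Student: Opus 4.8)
To prove that $\Av(2431)\A$ is not finitely based, the plan is to exhibit an infinite antichain of permutations in the basis of this class, since a finitely based class cannot have an infinite antichain contained in its basis. The crux is to construct, for each $n$, a permutation $\beta_n$ such that $\beta_n \notin \Av(2431)\A$ but every proper pattern of $\beta_n$ does lie in $\Av(2431)\A$, and such that the $\beta_n$ are pairwise incomparable in the pattern order.

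First I would recall the characterization machinery available. By Proposition~\ref{poset-condition}, $\tau \in \Av(2431)\A$ iff $P(\tau)$ admits a linear extension avoiding $2431$. The failure of the ``chain condition is sufficient'' heuristic flagged just before Theorem~\ref{chain3} is exactly what makes $2431$ (length $4$) interesting: here it can happen that $P(\tau)$ contains no $2431$-chain, yet \emph{every} linear extension of $P(\tau)$ nonetheless contains a $2431$ pattern. This gap is the source of the infinite basis, and I would look for a family $\tau = \beta_n$ realizing it in a ``minimal'' way. The key structural idea is to build permutations whose poset $P(\tau)$ forces a $2431$ across several weakly-ordered blocks no matter how the incomparable elements are linearized, while being robust enough that deleting any single point destroys the forcing.

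The main steps I would carry out are: (1) write down an explicit candidate family $\beta_n$ (most likely a ``staircase'' or layered construction, growing by a fixed gadget with each increment of $n$, so that membership fails through an unavoidable interaction of $n$ essentially independent constraints); (2) prove $\beta_n \notin \Av(2431)\A$ by arguing that in $P(\beta_n)$ the relevant elements are either chained or mutually incomparable in such a configuration that any linear extension must order some four of them as $2431$ --- this is a pigeonhole-style argument over all possible linearizations of the antichain portions of $P(\beta_n)$; (3) prove that every one-point deletion of $\beta_n$ \emph{does} admit a $2431$-avoiding linear extension, so each $\beta_n$ is genuinely a basis element (equivalently, $\beta_n$ is minimal not in the class); and (4) verify that $\{\beta_n\}$ forms an antichain, by checking that for $m \neq n$ neither $\beta_m$ nor $\beta_n$ embeds as a pattern in the other, which typically follows from a length or ``shape signature'' argument built into the construction.

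The hard part will be step (2): showing that \emph{no} linear extension of $P(\beta_n)$ avoids $2431$. Unlike the length-$3$ cases of Section~\ref{one-length-3}, there is no single greedy or inversion-maximizing recipe that succeeds, so the argument cannot just exhibit a good extension --- it must rule out all of them simultaneously. I expect this to require a careful case analysis of how the incomparable ``antichain'' elements of $P(\beta_n)$ can be interleaved, showing each choice triggers a $2431$ somewhere, and the combinatorial bookkeeping that makes this scale uniformly in $n$ is where the real work lies. Establishing the minimality in step (3) in a way that is uniform in $n$ --- rather than case-checked for small $n$ --- is the secondary obstacle, and I would aim to prove it by identifying, for each deleted point, an explicit avoiding extension of the remaining poset.
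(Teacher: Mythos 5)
Your proposal correctly identifies the right strategy --- it is the same one the paper uses: produce infinitely many permutations $\tau$ with $\tau \notin \Av(2431)\A$ but every one-point deletion of $\tau$ in $\Av(2431)\A$, using Proposition~\ref{poset-condition} to translate membership into the existence of a $2431$-avoiding linear extension of $P(\tau)$. But the proposal stops at the strategy: you never exhibit the family (step (1) is left as ``most likely a staircase or layered construction''), and you explicitly defer the two claims that constitute the entire mathematical content, namely that no linear extension of $P(\beta_n)$ avoids $2431$ (step (2)) and that every one-point deletion admits an avoiding extension (step (3)). Since designing a family for which both of these hold simultaneously is precisely the difficulty --- the two requirements pull in opposite directions --- what you have written is a proof plan, not a proof. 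For comparison, the paper's family is
\[2\ \ (m{+}1)\ \ 4\ 1\ \ 6\ 3\ \ 8\ 5\ \cdots\ (m{-}4)\ (m{-}7)\ \ (m{-}2)\ \ m\ \ (m{-}5)\ \ (m{-}1)\ \ (m{-}3)\]
for even $m \geq 4$, and its non-membership argument is not a pigeonhole over linearizations of antichains, as you anticipate, but a cascade: from $m \prec m-1 \prec m-3$ one deduces that $m-2$ cannot precede $m$ in any $2431$-avoiding extension, hence the extension contains $m\ (m{-}2)\ (m{-}5)$, hence $m-4$ cannot precede $m$, and so on down the staircase, until $m$ is forced to precede $4$; but $2 \prec m$ and $2 \prec 4 \prec 1$ then force the subsequence $2\ m\ 4\ 1 \sim 2431$, a contradiction. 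Minimality is then checked by exhibiting, for each deleted point, an explicit linear extension (obtained by swapping the first two values and sliding $m-1$ leftward to various positions) whose unique occurrence of $2431$ is destroyed by that deletion.

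One further simplification you missed: your step (4), verifying that the $\beta_n$ form an antichain, is redundant. Once steps (2) and (3) are done, each $\beta_n$ is a minimal non-member of the class, i.e.\ a basis element, and distinct basis elements are automatically incomparable (if $\beta_m \preceq \beta_n$ properly, then $\beta_n$ would have a proper subpermutation outside the class, contradicting step (3)). So infinitely many such permutations of distinct lengths immediately give an infinite basis; no ``shape signature'' argument is needed.
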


We consider the infinite family
%
%
%
\[2\ m+1\ (4\ 1)\ (6\ 3)\ (8\ 5)\ (10\ 7)\ldots (m-4\ m-7)\ m-2\ m\ m-5\ m-1\ m-3\]
for even $m\geq 4$.  The parentheses simply indicate how members of this family are formed.  A typical member of the family is
\[2\ 13\ 4\ 1\ 6\ 3\ 8\ 5\ 10\ 12\ 7\ 11\ 9\]
We shall prove that 
\begin{itemize}
\item no permutation in this family lies in $\Av(2431)\A$, and
\item all proper subpermutations of such permutations lie in $\Av(2431)\A$.
\end{itemize}


To see the first of these consider a typical member $\tau$ of the family.  We have to prove that every linear extension of $P(\tau)$ contains 2431.   For a contradiction suppose that $\lambda$ is a linear extension containing no 2431 pattern.

In $P(\tau)$ we have  $m\prec m-1\prec m-3$ so, in $\lambda$, $m-2$ cannot precede $m$ (or $m-2\ m\ m-1\ m-3$ would be a 2431 pattern).  Therefore (as $m-2\prec m-5$ in $P(\tau)$) $\lambda$ has a subsequence $m\ m-2\ m-5$.  But this means that $m-4$ cannot precede $m$ in $\lambda$ (or $m-4\ m\ m-2\ m-5$ would be a 2431 pattern).  But, again, $m-4\prec m-7$ means that $\lambda$ has a subsequence $m\ m-4\ m-7$ and hence $m-6$ cannot precede $m$ in $\lambda$ because it would give a 2431 pattern $m-6\ m\ m-4\ m-7$.  Continuing to argue in this way we eventually conclude that $m$ must precede 4.  However $2\prec 4\prec 1$ and $2\prec m$ and this implies that $2\ m\ 4\ 1$ is a subsequence of $\lambda$, a contradiction.

The second thing we have to prove is that, if $\tau'$ is the result of removing an arbitrary point of $\tau$ then $P(\tau')$ has a 2431-avoiding linear extension (so that $\tau'\in \Av(2431)\A$).

For illustrative purposes we take
 \[\tau=2\ 13\ 4\ 1\ 6\ 3\ 8\ 5\ 10\ 12\ 7\ 11\ 9\]
 and consider the following linear extensions of $P(\tau)$
 
\[13\ 2\ 4\ 1\ 6\ 3\ 8\ 5\ 10\ 12\ 7\ 11\ 9\]
\[13\ 2\ 4\ 1\ 6\ 3\ 8\ 5\ 12\ 10\ 7\ 11\ 9\]
\[13\ 2\ 4\ 1\ 6\ 3\ 12\ 8\ 5\ 10\ 7\ 11\ 9\]
\[13\ 2\ 4\ 1\ 12\ 6\ 3\ 8\ 5\ 10\ 7\ 11\ 9\]
\[13\ 2\ 12\ 4\ 1\ 6\ 3\ 8\ 5\ 10\ 7\ 11\ 9\]
 which are derived from $\tau$ itself by interchanging the first two terms and by changing the position of 12 so that it is successively before 7, 10, 8, 6, 4.  These linear extensions each have a unique subsequence isomorphic to 2431 (respectively 10 12 11 9, 8 12 10 7, 6 12 8 5, 4 12 6 3 and 2 12 4 1).
 
 In general any constraint $x\prec y$ of $P(\tau-k)$ is also a constraint of $P(\tau)$ and therefore if we remove a point $k$ from a linear extension of $P(\tau)$ we shall obtain a linear extension of $P(\tau-k)$.  For any $k<13$ there is at least one of the linear extensions in the above list which, when $k$ is removed, does not contain 2431 yielding a linear extension of $P(\tau-k)$ that does not contain 2431.  For $k=13$ the linear extension of $12\ 2\ 4\ 1\ 6\ 3\ 8\ 5\ 10\ 7\ 11\ 9$ does not contain 2431.
 
 The general case is completely similar.

\section{Conclusions and open questions}

Most of our paper has been about $\C\A$ where $\C$ is a pattern class with basis elements of lengths 3.  It seems to be very difficult to describe $\C\A$ for an arbitrary pattern class.  However, it is possible that, at least for principal pattern classes $\C=\Av(\alpha)$, it might be possible to solve the finite basis question in general.  We have run some computer experiments in the case $|\alpha|=4$ which seem to indicate that most  of these 24 classes are finitely based.  However we are very far from  finding any general necessary and sufficient conditions for $\Av(\alpha)\A$ to be finitely based.

There is, of course, the dual problem: given a pattern class $\C$ determine the pattern class $\A\C$ which is the set of permutations that a priority queue can transform into a permutation of $\C$.  If $\C$ contains every increasing permutation then $\A\C$ is the set of all permutations so the problem is only interesting if the basis of $\C$ contains $\iota_k=12\cdots k$ for some $k$.  For $\Av(\iota_k)$ itself the class of permutations that can transform to it is exactly $\Av(\iota_k)$.  This is because every permutation of $\Av(\iota_k)$ can transform to itself.  But a permutation not in $\Av(\iota_k)$ contains an increasing subsequence of length $k$ and this subsequence is transformed without change.  These remarks give one hope that this dual problem might be tractable.

In particular the classes $\mathcal{A}\Av(123,\pi)$ are all easy to describe for $\pi$ a non-monotone permutation of length 3. Specifically:
\begin{itemize}
\item
$\mathcal{A}\Av(123, 132) = \Av(123, 132)$
\item
$\mathcal{A}\Av(123, 213) = \Av(123, 213)$
\item
$\mathcal{A}\Av(123, 231) = 52413[\D, \D, 1, \D, \D]$
\item
$\mathcal{A}\Av(123, 312) = 35241[\D, 1, \D, \D, \D]$
\end{itemize}

The argument in each case is very simple: based on the structure of the permutations, $\tau$, in $\Av(123, \pi)$ given in Table \ref{table-two-three} the linear extensions of $P(\tau)$ can be explicitly listed which, according to Proposition \ref{poset-condition}, provide the elements of $\mathcal{A}\Av(123,\pi)$.

Finally we remark on a more general context for the results in this paper.  We can consider down-sets of permutation pairs other than $\A$ and corresponding analogues of the $\C\longrightarrow \C\A$ operator.  These down-sets $\mathcal{B}$ arise in just the same way as they arise for pattern classes by forbidding one or more pairs to be contained in the pairs of $\mathcal{B}$.  However natural examples are not so readily found.  One such is the set defined by the forbidden pair $(12,21)$ which defines the weak order and some preliminary work on this case may be found in \cite{albert:compositions-of:}.

\bibliographystyle{acm}
\bibliography{refs}

\end{document}